\documentclass{amsart}

\usepackage{amsmath}

\usepackage{amsthm}
\usepackage{amsfonts}
\usepackage{dsfont}
\usepackage{enumitem}
\usepackage{color}

\newcommand{\reals}{\mathbb{R}}
\newcommand{\rationals}{\mathbb{Q}}
\newcommand{\complex}{\mathbb{C}}

\newcommand{\integers}{\mathbb{Z}}


\newcommand{\bracketb}[1]{\Big[#1\Big]}

\newcommand{\angles}[1]{\left\langle #1 \right\rangle}

\newcommand{\para}[1]{\left(#1\right)}
\newcommand{\paraa}[1]{\big(#1\big)}
\newcommand{\parab}[1]{\Big(#1\Big)}

\newcommand{\parad}[1]{\Bigg(#1\Bigg)}


\newcommand{\spacearound}[1]{\quad#1\quad}

\renewcommand{\implies}{\spacearound{\Rightarrow}}

\renewcommand{\div}{\operatorname{div}}

\newtheorem{theorem}{Theorem}[section]

\newtheorem{lemma}[theorem]{Lemma}
\newtheorem{proposition}[theorem]{Proposition}

\theoremstyle{definition}
\newtheorem{definition}[theorem]{Definition}
\theoremstyle{remark}

\numberwithin{equation}{section}

\newcommand{\xv}{\vec{x}}
\renewcommand{\mid}{\mathds{1}}
\newcommand{\Ric}{\operatorname{Ric}}
\newcommand{\half}{\frac{1}{2}}
\newcommand{\thalf}{\tfrac{1}{2}}
\newcommand{\Wt}{\tilde{W}}
\newcommand{\Rt}{\tilde{R}}
\newcommand{\exph}{e^{\hbar}}
\newcommand{\expmh}{e^{-\hbar}}

\newcommand{\Fh}{\mathcal{F}_\hbar}
\newcommand{\Fhp}{\mathcal{F}^+_\hbar}
\newcommand{\C}{\mathcal{C}}
\newcommand{\Ch}{\C_{\hbar}}
\newcommand{\Chh}{\widehat{\mathcal{C}}_\hbar}
\renewcommand{\d}{\partial}
\newcommand{\db}{\bar{\d}}
\newcommand{\Der}{\operatorname{Der}}
\newcommand{\du}{\partial_{u}}
\newcommand{\dv}{\partial_{v}}
\renewcommand{\S}{\mathcal{S}}
\newcommand{\XChh}{\mathcal{X}(\Chh)}
\newcommand{\Phib}{\bar{\Phi}}
\newcommand{\g}{\mathfrak{g}}
\newcommand{\Zh}{Z_{\hbar}}
\newcommand{\Zhp}{\Zh^{+}}
\newcommand{\nablad}{\nabla_{\!\d}}
\newcommand{\nabladb}{\nabla_{\!\db}}
\newcommand{\eps}{\varepsilon}
\newcommand{\CcRZ}{C^\infty_0(\reals\times\integers)}

\title{A noncommutative catenoid}
\author{Joakim Arnlind and Christoffer Holm}

\address[Joakim Arnlind]{Dept. of Math.\\
Link\"oping University\\
581 83 Link\"oping\\
Sweden}
\email{joakim.arnlind@liu.se}

\address[Christoffer Holm]{Dept. of Math.\\%
Link\"oping University\\
581 83 Link\"oping\\
Sweden}
\email{chrho686@student.liu.se}

\subjclass[2000]{}
\keywords{}

\begin{document}

\maketitle

\begin{abstract}
  A noncommutative algebra corresponding to the classical catenoid is
  introduced together with a differential calculus of derivations. We
  prove that there exists a unique metric and torsion-free connection
  that is compatible with the complex structure, and the curvature is
  explicitly calculated. A noncommutative analogue of the fact that
  the catenoid is a minimal surface is studied by constructing a
  Laplace operator from the connection and showing that the embedding
  coordinates are harmonic. Furthermore, an integral is defined and the
  total curvature is computed.  Finally, classes of left and right
  modules are introduced together with constant curvature connections,
  and bimodule compatibility conditions are discussed in detail.
\end{abstract}

\section{Introduction}

\noindent
In recent years, there has been great progress in understanding
Riemannian aspects of noncommutative geometry and its relation to
topology. For instance, the scalar curvature defined via the heat
kernel has been computed for noncommutative tori and a noncommutative
version of the Gauss-Bonnet theorem has been established (see e.g
\cite{ct:gaussBonnet,fk:gaussBonnet,fk:scalarCurvature,cm:modularCurvature}). In
parallel, one has investigated the role of the Levi-Civita connection
and the curvature tensor, in order to understand to what extent
classical geometrical concepts remain relevant in noncommutative
geometry (see e.g
\cite{dvm:central.bimodules,dvmmm:onCurvature,ac:ncgravitysolutions,bm:starCompatibleConnections,r:leviCivita,a:curvatureGeometric,aw:CGB.sphere,aw:curvature.three.sphere}). In
contrast to the approach via the heat kernel, much of this work has
not been carried out in the setting of $C^\ast$-algebras and spectral
triples, but rather taking a less analytical, and more algebraical,
point of view, constructing curvature through a ``bottom-up'' approach
starting from a hermitian form on a (projective) module. In the
future, it will be interesting to see how these different approaches
may be reconciled. Even though a lot of progress has been made it is
not completely clear what kind of assumptions that are needed in order
to find a unique Levi-Civita connection and what kind of properties
(e.g. symmetries) that one should expect. Therefore, it is useful to
consider particular examples to understand what one might (might not)
expect in the general case.

Another motivation comes from the theory of minimal
surfaces. Whereas the classical theory is by know well developed
(although many interesting questions are still open), its
noncommutative analogue is in an early stage. Several authors have
approached noncommutative minimal submanifolds from different
perspectives (see e.g
\cite{mr:noncommutative.sigma.model,dll:sigma.model.solitions,ach:noncommutative.minimal.surfaces})
but a general framework is still missing. The catenoid is one of the
most well-known minimal surfaces in Euclidean space, and it is
interesting to understand how its properties manifest themselves in
noncommutative geometry. Note that related quantum catenoids have been
considered, although not primarily from a geometrical point of view
\cite{ah:quantizedMinimal,ach:noncommutative.minimal.surfaces}.

In this note we shall construct a noncommutative algebra $\Chh$ that
is closely related to the classical catenoid, which is a (noncompact)
minimal surface embedded in $\reals^3$. The algebra $\Chh$ is not a
$C^\ast$-algebra in any natural way, and typical representations are
given by unbounded operators. However, the algebraic structure is quite
appealing and in many ways similar to the noncommutative torus,
a fact we shall employ to find several natural constructions.

The paper is organized as follows: In Section~\ref{sec:catalgebra} the
algebra $\Chh$, together with a set of derivations, is introduced and
a few basic properties are established. Section~\ref{sec:curvature}
introduces a natural module of vector fields and proves that given a
metric there exists a unique torsion-free connection which is
compatible with the metric and the complex structure. Finally,
Section~\ref{sec:integration} introduces an integral and computes the
total curvature of the metric, and Section~\ref{sec:bimodules}
studies bimodules together with constant curvature connections.

\section{The catenoid algebra}\label{sec:catalgebra}

\noindent
In this section we start from a parametrization of the classical
catenoid and use the Weyl algebra in order to find a natural
definition of a noncommutative catenoid. A parametrization of the
catenoid embedded in $\reals^3$ is given by
\begin{align*}
  \xv(u,v) = \paraa{x^1(u,v),x^2(u,v),x^3(u,v)} = \paraa{\cosh(u)\cos(v),\cosh(u)\sin(v),u}
\end{align*}
for $-\infty<u<\infty$ and $0\leq v\leq 2\pi$. The algebra generated
by the functions $x^1,x^2,x^3$ can in principle also be generated by
$u$, $e^{\pm u}$ and $e^{\pm iv}$. Now, starting from the Weyl
algebra, consisting of two hermitian generators $U$ and $V$ satisfying
\begin{align*}
  [U,V]=i\hbar \mid,
\end{align*}
we shall construct an algebra generated by $U$, $R$ and $W$,
corresponding (formally) to $U$, $e^U$ and $e^{iV}$
respectively. Guided by the Baker-Campbell-Hausdorff formula, giving
e.g.
\begin{align*}
  RW = e^{U}e^{iV}=e^{U+iV+\half[U,iV]}=e^{iV+U+\half[iV,U]-[iV,U]}
  =e^{-\hbar}e^{iV}e^U=e^{-\hbar}WR,
\end{align*}
as well as the formal expansions of $e^U$ and $e^{iV}$ as power series, one
introduces the following relations
\begin{alignat}{2}
  & \Wt W = \mid & &W\Wt=\mid\label{eq:relWWt}\\
  &R\Rt = \mid & &\Rt R =\mid\\
  &RU=UR & &\Rt U=U\Rt\\
  &WR=\exph RW & &W\Rt=\expmh\Rt W\\
  &\Wt R = \expmh R\Wt & &\Wt\Rt=\exph\Rt\Wt\\
  &WU=UW+\hbar W &\qquad &\Wt U=U\Wt-\hbar\Wt\label{eq:relWU}
\end{alignat}
where $\Rt$ and $\Wt$ have been introduced, representing the
inverses of $R$ and $W$. 

\begin{definition}
  Let $\complex\langle U,R,\Rt,W,\Wt\rangle$ be the free associative
  unital algebra on the letters $U,R,\Rt,W,\Wt$ and let $I_\hbar$ be
  the two-sided ideal generated by relations
  \eqref{eq:relWWt}--\eqref{eq:relWU}. We define $\Ch$ as the
  quotient algebra
  \begin{align*}
    \Ch=\complex\langle U,R,\Rt,W,\Wt\rangle\slash I_\hbar.
  \end{align*}
\end{definition}

\noindent
Next, we note that the relations \eqref{eq:relWWt}--~\eqref{eq:relWU}
allows one to always order any element lexicographically (with respect
to the alphabet $U,R,\Rt,W,\Wt$, up to terms of lower total order) and,
moreover, we prove that ordered monomials are linearly independent.

\begin{proposition}
  A basis for $\Ch$ is given by
  \begin{align*}
    e^{\alpha j k} = U^\alpha R^j W^k
  \end{align*}
  for $\alpha\in\integers_{\geq 0}$ and $j,k\in\integers$, where
  $R^{-j}=\Rt^j$ and $W^{-k}=\Wt^k$.
\end{proposition}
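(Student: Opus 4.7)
The plan is to prove two things separately: that the ordered monomials $e^{\alpha j k} = U^\alpha R^j W^k$ span $\Ch$, and that they are linearly independent. The spanning step will follow from a straightening procedure applying the defining relations as rewrite rules, while linear independence will follow from exhibiting a suitable representation on a function space.

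For spanning, I would argue by induction that every word in the letters $U, R, \Rt, W, \Wt$ can be brought to ordered form modulo $I_\hbar$. The relations $R\Rt = \Rt R = \mid$ and $W\Wt = \Wt W = \mid$ cancel consecutive inverses, the commutation relations between $R,\Rt$ and $U$ are cost-free, and $WR=\exph RW$ together with its three cousins move $W$'s past $R$'s at the price of a scalar. The only rewrites that are not length-preserving are $WU = UW + \hbar W$ and $\Wt U = U\Wt - \hbar\Wt$, but in each case the extra term has strictly lower total $U$-degree than the left-hand side. To formalize this, I would equip monomials with the lexicographic order on the pair (degree in $U$, number of out-of-order adjacent pairs with respect to the alphabet ordering $U<R<\Rt<W<\Wt$) and check that every rewrite rule strictly decreases this invariant, so that the rewriting process terminates at an ordered expression.

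For linear independence, I would build a representation of $\Ch$ on a space of smooth functions of one real variable. Motivated by the Weyl representation $V=-i\hbar\,\d_u$, set
\[
(Uf)(u)=uf(u),\qquad (Rf)(u)=e^u f(u),\qquad (\Rt f)(u)=e^{-u}f(u),
\]
\[
(Wf)(u)=f(u+\hbar),\qquad (\Wt f)(u)=f(u-\hbar).
\]
A direct check confirms that all the defining relations \eqref{eq:relWWt}--\eqref{eq:relWU} hold, so one obtains a well-defined algebra homomorphism $\pi:\Ch\to\operatorname{End}(C^\infty(\reals))$, under which
\[
\paraa{\pi(U^\alpha R^j W^k)f}(u) = u^\alpha e^{ju}\,f(u+k\hbar).
\]
Given a finite linear combination $\sum_{\alpha,j,k} c_{\alpha j k}U^\alpha R^j W^k$ whose image vanishes, I would choose test functions $f$ supported in an arbitrarily small neighbourhood of a chosen point $u_0-k_0\hbar$ to isolate the contribution of a single value $k=k_0$. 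This reduces the problem to the identity $\sum_{\alpha,j}c_{\alpha j k_0}\,u^\alpha e^{ju}\equiv 0$ on a non-empty open set, from which the linear independence of the functions $\{u^\alpha e^{ju}\}_{\alpha\geq 0,\,j\in\integers}$ yields $c_{\alpha j k_0}=0$ for all $\alpha,j$. Since $k_0$ was arbitrary, all coefficients vanish.

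The main obstacle is the well-founded termination argument in the spanning step: the commutator relation $WU=UW+\hbar W$ naturally produces a lower-order correction, and one must ensure that repeated rewriting does not spiral into an infinite descent. The complexity measure proposed above, with degree in $U$ as the dominant term and inversion count as the tie-breaker, handles this, but it has to be verified case by case that every rewrite rule either strictly lowers the $U$-degree or keeps it fixed while strictly reducing the number of inversions.
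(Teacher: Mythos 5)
Your proposal takes a genuinely different route from the paper. The paper invokes Bergman's Diamond Lemma once and for all: it fixes the semigroup partial order ``shorter words first, then lexicographic'' and checks that all $20$ overlap ambiguities resolve, which simultaneously yields spanning and linear independence of the irreducible monomials. You instead separate the two halves, proving spanning by a bespoke termination argument for the rewriting system and linear independence by exhibiting an explicit representation of $\Ch$ on $C^\infty(\reals)$. The representation part is sound and is, to my mind, a nice complement to the paper's purely combinatorial argument: all twelve relations \eqref{eq:relWWt}--\eqref{eq:relWU} do hold for the operators you write down, $\pi(U^\alpha R^jW^k)f(u)=u^\alpha e^{ju}f(u+k\hbar)$ is correct, and the localization trick with bump functions together with the linear independence of $\{u^\alpha e^{ju}\}$ on any open interval (by analyticity) does force every coefficient to vanish --- provided $\hbar\neq 0$. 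At $\hbar=0$ the shift $W$ acts as the identity, the $k$-layers collapse, and your representation no longer sees the $W^k$'s; you would need to note that this degenerate case is the commutative Laurent--polynomial algebra and handle it separately.

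The spanning half, however, has a real gap: the complexity measure you propose, the pair $(\text{degree in }U,\ \text{number of out-of-order adjacent pairs})$, does not strictly decrease under all twelve rewrites, so the claimed termination is not established. Two ways it fails: (i) for $R\Rt\to\mid$ and $W\Wt\to\mid$ the left-hand sides are already in alphabetical order, so both components are unchanged (e.g.\ $R\Rt$ has invariant $(0,0)$ and so does $\mid$); (ii) the ``adjacent inversion'' count is not compatible with embedding into a larger word, e.g.\ $\Wt\,W\,U\,R\to\Wt\,U\,W\,R$ via $WU\to UW$ leaves the invariant at $(1,2)$ on both sides. The fix is exactly what the paper does: compare by word \emph{length} first and break ties lexicographically on same-length words (this is the semigroup order the paper uses for the Diamond Lemma). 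With that order, the four cancellation rules drop the length, the same-length reorderings $RU\to UR$, $WR\to RW$, $WU\to UW$, $\Wt U\to U\Wt$, etc.\ strictly decrease lexicographically, and the correction terms $\hbar W$, $-\hbar\Wt$ again drop the length. Equivalently, $(\text{length},\ \text{total inversion count})$ also works, since for a same-length permutation rewrite the cross-inversions with the ambient context are preserved; but an adjacent-only count is not. With the measure repaired and the $\hbar=0$ case noted, your two-step argument is a valid and instructive alternative to the ambiguity-checking in the paper.
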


\begin{proof}
  In the proof, we shall use the terminology of
  the Diamond lemma \cite{b:diamondlemma} in order to show that
  $\{e^{\alpha jk}\}$ provides a basis for $\Ch$. To this end we start by
  formulating relations \eqref{eq:relWWt}--~\eqref{eq:relWU} in the
  form of a reduction system:
  \begin{alignat*}{3}
    &\sigma_1=(\Wt W,\mid) & &\sigma_2=(W\Wt,\mid) &
    &\sigma_3=(\Rt R,\mid)\\
    &\sigma_4=(R\Rt,\mid) & &\sigma_5=(RU,UR) & &\sigma_6=(\Rt
    U,U\Rt)\\
    &\sigma_7=(WR,\exph RW) & &\sigma_8=(W\Rt,\expmh\Rt W) & 
    &\sigma_9=(\Wt R,\expmh R\Wt)\\
    &\sigma_{10}=(\Wt\Rt,\exph\Rt\Wt)\quad & &\sigma_{11}=(WU,UW+\hbar W)\quad & 
    &\sigma_{12}=(\Wt U,U\Wt-\hbar\Wt),
  \end{alignat*}
  for which we will use the notation $\sigma_i=(W_i,f_i)$.  Let us set
  up a semi-group partial ordering on monomials in $U,R,\Rt,W,\Wt$,
  which is compatible with the above reduction system. That is, a
  semi-group partial ordering such that if $\sigma_i=(W_i,f_i)$ then
  $W_i$ is strictly greater than every monomial in $f_i$. Let
  $p=X^1\cdots X^n$ and $q=Y^1\cdots Y^m$ be two monomials with
  $X^i,Y^i\in\{U,R,\Rt,W,\Wt\}$. We say that $p<q$ if $n<m$ or if
  $n=m$ and $p$ precedes $q$ in lexicographic order with respect to
  the alphabet $U,R,\Rt,W,\Wt$. It is easy to see that this does
  indeed define a semi-group partial ordering compatible with the
  above reduction system. Moreover, it is straightforward to check
  that the partial ordering satisfies the descending chain
  condition. Theorem 1.2 in \cite{b:diamondlemma} tells us that if all
  ambiguities in the above reduction system are resolvable, then a
  basis of $\Ch$ is given by the irreducible monomials. That is,
  monomials which can not be reduced further by using the reduction
  system, replacing $W_i$ by $f_i$ for $i=1,\ldots,12$. How do the
  irreducible monomials look like? It is clear that the monomial
  $U^\alpha R^j W^k$ is irreducible since it does not contain any of
  $W_1,\ldots,W_{12}$. Moreover, there are no other irreducible
  polynomials since one may always use the reduction system to put
  monomials in lexicographic ordering, as well as replacing $R\Rt$,
  $\Rt R$, $W\Wt$ and $\Wt W$ by $\mid$. Thus, it remains to prove
  that all ambiguities are resolvable. There are 20 ambiguities to be
  resolved:
  \begin{align*}
    &(\Wt W)\Wt=\Wt(W\Wt),
    (\Wt W)R=\Wt(WR),
    (\Wt W)\Rt=\Wt(W\Rt),\\
    &(\Wt W)U=\Wt(WU),
    (W\Wt)R=W(\Wt R),
    (W\Wt)\Rt=W(\Wt\Rt),\\
    &(W\Wt)W=W(\Wt W),
    (W\Wt)U=W(\Wt U),
    (\Rt R)\Rt=\Rt(R\Rt),\\
    &(\Rt R)U=\Rt(RU),
    (R\Rt)U=R(\Rt U),
    (R\Rt)R=R(\Rt R),\\
    &(WR)\Rt=W(R\Rt),
    (WR)U=W(RU),
    (W\Rt)R=W(\Rt R),\\
    &(W\Rt)U=W(\Rt U),
    (\Wt R)\Rt=\Wt(R\Rt),
    (\Wt RU)=\Wt(RU),\\
    &(\Wt\Rt)R=\Wt(\Rt R),
    (\Wt\Rt)U=\Wt(\Rt U),
  \end{align*}
  where the parenthesis mark which part of the monomial that is to be
  replaced by using the reduction system. It is straightforward to
  check that they are all resolvable, but let illustrate the procedure by
  explicitly checking $(\Wt W)R=\Wt(WR)$:
  \begin{align*}
    (\mid)R-\Wt(\exph RW)=R-\exph\Wt R W=R-\exph\expmh R\Wt W=R-R=0.
  \end{align*}
  As previously stated, after showing that all ambiguities are
  resolvable, Theorem 1.2 in \cite{b:diamondlemma} implies that the
  monomials $U^\alpha R^jW^k$ provide a basis for $\Ch$. 
\end{proof}

\noindent
We can make $\Ch$ into a $\ast$-algebra by setting 
\begin{alignat*}{3}
  &U^\ast = U &\qquad &R^\ast=R &\qquad &\Rt^\ast=\Rt\\
  &W^\ast=\Wt & &\Wt^\ast = W & &
\end{alignat*}
and noting that the set of relations
\eqref{eq:relWWt}--\eqref{eq:relWU} is invariant with respect to this
involution. From now on we will use the more convenient notation
$R^{-1}=\Rt$ and $W^{-1}=\Wt$. The next results gives a differential
calculus on $\Ch$, in direct analogy with the classical derivatives.

\begin{proposition}\label{prop:derivations}
  There exist hermitian derivations $\du,\dv\in\Der(\Ch)$ such that
  \begin{alignat*}{3}
    &\du U = \mid &\qquad &\du R=R &\qquad &\du W = 0\\
    &\dv U = 0 &\qquad &\dv R=0 &\qquad &\dv W = iW    
  \end{alignat*}
  and $[\du,\dv]=0$.
\end{proposition}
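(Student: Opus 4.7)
The strategy is to define $\du$ and $\dv$ first on the free associative algebra $\complex\angles{U,R,\Rt,W,\Wt}$ and then show that the ideal $I_\hbar$ is preserved, so that they descend to derivations of $\Ch$. Since the free algebra is free, there is a unique derivation extending any prescribed values on the five generators; I set $\du U=\mid$, $\du R=R$, $\du\Rt=-\Rt$, $\du W=0$, $\du\Wt=0$, and $\dv U=0$, $\dv R=0$, $\dv\Rt=0$, $\dv W=iW$, $\dv\Wt=-i\Wt$, and extend by the Leibniz rule. The values on $\Rt$ and $\Wt$ are the only ones consistent with $R\Rt=\mid$ and $\Wt W=\mid$.

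Since $\du$ is a derivation on the free algebra, checking $\du(I_\hbar)\subseteq I_\hbar$ reduces to verifying that $\du$ sends each of the twelve generators of $I_\hbar$ into $I_\hbar$ (and similarly for $\dv$). Most cases are immediate (e.g.\ $\du(\Wt W-\mid)=0$ and $\dv(RU-UR)=0$), and the nontrivial ones produce either zero or a scalar multiple of the original relation, as in
\begin{align*}
  \du(WR-\exph RW) &= WR-\exph RW,\\
  \dv(WR-\exph RW) &= i(WR-\exph RW),\\
  \du(WU-UW-\hbar W) &= W-W-0 = 0.
\end{align*}
Running through the full list of twelve relations finishes this step, and therefore $\du$ and $\dv$ descend to derivations of $\Ch$ with the prescribed action on generators.

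For hermiticity, observe that $x\mapsto\paraa{\du(x^*)}^*$ is itself a derivation of $\Ch$ (because the involution is an antihomomorphism), and it agrees with $\du$ on each of the five generators, e.g.\ $\paraa{\dv(W)}^*=(iW)^*=-i\Wt=\dv(W^*)$; the two derivations therefore coincide. Finally, $[\du,\dv]$ is a derivation that annihilates $U$, $R$ and $W$, hence also $R^{-1}$ and $W^{-1}$ by the Leibniz rule, and consequently vanishes on all of $\Ch$. The only real obstacle in the argument is the systematic bookkeeping in the ideal-preservation step; nothing conceptually delicate arises.
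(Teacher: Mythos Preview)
Your proposal is correct and follows essentially the same approach as the paper: define the derivations on generators, verify that the defining relations \eqref{eq:relWWt}--\eqref{eq:relWU} are preserved (the paper phrases this as ``respecting the relations'' rather than ``preserving the ideal,'' but it is the same computation), and check hermiticity on generators. Your treatment is in fact slightly more complete, since you explicitly argue that $[\du,\dv]$ vanishes as a derivation annihilating the generators, a point the paper's proof leaves implicit.
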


\begin{proof}
  Since derivations are linear and satisfies the product rule, the
  relations in Proposition~\ref{prop:derivations} completely determine
  the action of $\d_u,\d_v$ on $\Ch$. However, in order to be well
  defined, one needs to check that the derivations respect the
  relations in $\Ch$. Thus, one need to check that they are consistent
  with \eqref{eq:relWWt}--~\eqref{eq:relWU}. For instance,
  \begin{align*}
    \d_u(WR-\exph RW)&=\d_u(W)R+W\d_u(R)-\exph\d_u(R)W-\exph R\d_u(W)\\
    &=WR-\exph RW = 0.
  \end{align*}
  In the same way, one may check that $\d_u$ and $\d_v$ respect all
  the relations in the algebra. Moreover, one readily checks that the
  derivations are hermitian; for instance,
  \begin{align*}
    \d_vW^\ast =
    \d_vW^{-1}=-W^{-1}\d_v(W)W^{-1}=-iW^{-1}=-iW^\ast=(iW)^\ast
    =(\d_v W)^\ast,
  \end{align*}
  and analogous computations yield similar results for the remaining
  relations.
\end{proof}

\noindent
Let $\g$ denote the (abelian) complex Lie algebra
generated by $\du$ and $\dv$, and introduce
\begin{align*}
  &\d = \thalf(\du-i\dv)\\
  &\db = \thalf(\du+i\dv).
\end{align*}
For easy reference, let us write out
\begin{alignat*}{3}
  &\d R = \thalf R &\quad &\d R^{-1}=-\thalf R^{-1} &\quad &\d U = \thalf\mid\\
  &\db R = \thalf R & &\db R^{-1}=-\thalf R^{-1} & &\db U = \thalf\mid\\
  &\d W = \thalf W & &\d W^{-1} = -\thalf W^{-1} & &\\
  &\db W = -\thalf W & &\db W^{-1} = \thalf W^{-1}. & &
\end{alignat*}
In the classical setting, functions composed from
$u,e^{\pm u},e^{\pm iv}$ make up a small subset of the smooth
functions on the catenoid; for instance, even though $1+u^2$ is
strictly positive for all $u\in\reals$, there is no element
corresponding to $1/(1+u^2)$ in the algebra. Thus, one would like to
extend $\Ch$ to include elements that correspond to more general
functions. In this paper, our main concern is not to find the most
general algebra for this purpose, but rather to take an opposite
approach, where a minimal extension of $\Ch$ is considered in order to
develop the framework (although, it would be interesting to see how
the results of \cite{w:nuclear.weyl} apply to the current
situation). We will take an approach built on localization, using the
Ore condition. Therefore, one starts by understanding the set of zero
divisors.

\begin{proposition}\label{prop:zero.divisors}
  The algebra $\Ch$ has no zero-divisors.
\end{proposition}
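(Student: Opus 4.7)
My plan is to use the basis $\{U^\alpha R^j W^k\}$ to define a leading bidegree that multiplies predictably, and to conclude by the standard argument that a twisted monoid algebra over an integral domain, indexed by an ordered abelian group, is itself a domain. The first step is to derive a closed-form product of two basis elements. Iterating $RU = UR$ and $WU = UW + \hbar W$ gives $W^k U^\beta = (U + k\hbar)^\beta W^k$ for every $\beta \geq 0$ and every $k \in \integers$, while the four mixed $W$--$R$ relations combine uniformly into $W^k R^j = e^{jk\hbar} R^j W^k$ for all $j, k \in \integers$. Together these yield
\[
\bigl(p(U) R^j W^k\bigr)\bigl(q(U) R^{j'} W^{k'}\bigr) = e^{j'k\hbar}\, p(U)\, q(U + k\hbar)\, R^{j+j'}\, W^{k+k'}
\]
for any $p,q \in \complex[U]$.

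Next, I would write a nonzero $a \in \Ch$ uniquely as $a = \sum_{(j,k) \in S_a} p_{jk}(U)\, R^j W^k$ with each $p_{jk} \in \complex[U]$ nonzero and $S_a \subset \integers^2$ finite and nonempty, and likewise $b = \sum q_{j'k'}(U)\, R^{j'} W^{k'}$ over a finite nonempty $S_b$. Equip $\integers^2$ with the lexicographic order; this is a total order strictly compatible with addition, so writing $(j_*, k_*) = \max S_a$ and $(j'_*, k'_*) = \max S_b$, the sum $(j_* + j'_*, k_* + k'_*)$ is the \emph{unique} element of $S_a + S_b$ attaining $\max S_a + \max S_b$. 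By the product formula, the coefficient of $R^{j_* + j'_*} W^{k_* + k'_*}$ in $ab$ therefore reduces to the single term
\[
e^{j'_* k_* \hbar}\, p_{j_* k_*}(U)\, q_{j'_* k'_*}(U + k_* \hbar),
\]
which is a nonzero element of the integral domain $\complex[U]$ since shifting $U$ by a constant preserves nonzeroness. Hence $ab \neq 0$.

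The only non-routine step is repackaging the six commutation relations into the clean product formula above; once that is done the rest is bookkeeping. In particular, I would be careful to verify that the twist $W^k q(U) = q(U + k\hbar) W^k$ holds for every $k \in \integers$ (the negative case coming from $\Wt U = U\Wt - \hbar\Wt$), and that the lexicographic order on $\integers^2$ is really strictly compatible with addition, since these two observations are precisely what turn the leading-bidegree argument into a proof.
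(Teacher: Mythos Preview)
Your proof is correct and follows essentially the same leading-term strategy as the paper. The only difference is organizational: the paper orders individual monomials $U^\alpha R^j W^k$ lexicographically in the tridegree $(\alpha,j,k)$ and tracks the single leading scalar coefficient, whereas you group the $U$-part into polynomial coefficients $p_{jk}(U)\in\complex[U]$, work with the bidegree $(j,k)$, and invoke that $\complex[U]$ is a domain. Your explicit product formula $\bigl(p(U)R^jW^k\bigr)\bigl(q(U)R^{j'}W^{k'}\bigr)=e^{j'k\hbar}\,p(U)\,q(U+k\hbar)\,R^{j+j'}W^{k+k'}$ makes the argument slightly more transparent than the paper's appeal to ``it follows from the relations,'' but the underlying idea is identical.
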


\begin{proof}
  For every $a\in\Ch$ one may write
  \begin{align*}
    a = \sum_{\alpha,j,k}a_{\alpha jk}U^\alpha R^j W^k\qquad
  \end{align*} 
  with $a_{\alpha j k}\in\complex$, and we define the following integers
  \begin{align*}
    N(a)=&\max\{\alpha:\exists j,k\text{ such that }a_{\alpha j k}\neq 0\}\\
    J(a)=&\max\{j:\exists k\text{ such that }a_{N(a) j k}\neq 0\}\\
    K(a)=&\max\{k:a_{N(a)J(a)k}\neq 0\}.
  \end{align*}
  Now, assume that $ab=0$ with $a\neq 0$ and $b\neq 0$. From the relations
  \eqref{eq:relWWt}--~\eqref{eq:relWU} it follows that in the product
  $ab$, there is exactly one term proportional to
  \begin{align*}
    U^{N(a)+N(b)}R^{J(a)+J(b)}W^{K(a)+K(b)},
  \end{align*}
  and the coefficient is given by
  $a_{N(a)J(a)K(a)}b_{N(b)J(b)K(b)}$. Now, since $\{U^\alpha R^jW^k\}$
  is a basis for $\Ch$ it follows that either $a_{N(a)J(a)K(a)}=0$ or
  $b_{N(b)J(b)K(b)}= 0$. However, this contradicts the
  assumption. Hence, if $ab=0$ then at least one of $a$ and $b$ has to
  be zero.
\end{proof}

\noindent
Next, we establish the Ore condition for $\Ch$, which gives a condition for a
non-trivial localization to exist.

\begin{lemma}\label{lemma:ore.condition}
  For every $a,b\in\Ch$, there exists $p,q\in\Ch$ such that
  \begin{align*}
    ap = bq,
  \end{align*}
  and at least one of $p$ and $q$ is non-zero.
\end{lemma}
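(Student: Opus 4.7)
The plan is to establish the Ore condition by a standard dimension-counting argument of Goldie type: we set up a three-parameter filtration of $\Ch$ by finite-dimensional subspaces $V_n$, show that multiplication by $a$ and by $b$ both map $V_n$ into a common larger subspace $V_{n+M}$, and then exploit the fact that $\dim(V_n\oplus V_n)$ grows twice as fast as $\dim V_{n+M}$ to force a nontrivial relation $ap = bq$.

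The first step is to compute the normal-ordered product of two basis monomials. Iterating \eqref{eq:relWU} gives $W^kU^{\alpha'}=(U+k\hbar)^{\alpha'}W^k$ for every $k\in\integers$, and iterating the relations between $W^{\pm1}$ and $R^{\pm1}$ gives $W^kR^{j'}=e^{\hbar kj'}R^{j'}W^k$ for all $k,j'\in\integers$. Combined, these yield
\begin{align*}
  (U^\alpha R^jW^k)(U^{\alpha'}R^{j'}W^{k'}) = e^{\hbar kj'}\,U^\alpha(U+k\hbar)^{\alpha'}\,R^{j+j'}\,W^{k+k'},
\end{align*}
which expands in the basis of Proposition 2.1 as a linear combination of monomials $U^\beta R^{j+j'}W^{k+k'}$ with $\beta\leq \alpha+\alpha'$. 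Thus multiplication adds the $R$- and $W$-exponents exactly and adds the $U$-degrees at most.

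Next I would define
\begin{align*}
  V_n = \operatorname{span}\pba{U^\alpha R^jW^k : 0\leq \alpha\leq n,\ \abs{j}\leq n,\ \abs{k}\leq n},
\end{align*}
with $\dim V_n = (n+1)(2n+1)^2$. For given $a,b\in\Ch$, let $M$ be a common bound on the $U$-degrees and on the absolute values of the $R$- and $W$-exponents appearing in $a$ and $b$. The product formula immediately gives $aV_n\subseteq V_{n+M}$ and $bV_n\subseteq V_{n+M}$, so the linear map
\begin{align*}
  \phi : V_n\oplus V_n \longrightarrow V_{n+M}, \qquad \phi(p,q) = ap - bq,
\end{align*}
is well defined. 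Since $2\dim V_n = 2(n+1)(2n+1)^2 \sim 8n^3$ while $\dim V_{n+M} = (n+M+1)(2n+2M+1)^2 \sim 4n^3$, for all sufficiently large $n$ the map $\phi$ has nontrivial kernel, and any $(p,q)\in\ker\phi\setminus\{(0,0)\}$ produces the desired pair. The edge cases where $a=0$ or $b=0$ are handled trivially (take $q=0,p=\mid$ or $p=0,q=\mid$).

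The main thing requiring care is the bookkeeping in the second step: namely, verifying that the normal-ordering procedure, which introduces polynomials $(U+k\hbar)^{\alpha'}$ and scalar factors $e^{\hbar k j'}$, never increases the $U$-, $R$-, or $W$-exponents beyond the naive additive bound, so that the filtration $\{V_n\}$ really is preserved with constant $M$. Once that is clear, the proof reduces to the polynomial growth comparison above.
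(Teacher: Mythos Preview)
Your proof is correct and follows essentially the same dimension-counting approach as the paper: bound the exponents of $a$ and $b$, set up a linear system for the coefficients of $p$ and $q$, and observe that the number of unknowns eventually exceeds the number of equations. The only cosmetic differences are that you derive the normal-ordered product formula explicitly and use an asymptotic comparison (``for all sufficiently large $n$'') rather than the paper's explicit choice $M=4N$.
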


\begin{proof}
  The proof is a simple argument counting the number of equations and
  the number of variables in a set of linear equations (compare with
  the proof of a similar statement in the case of the Weyl algebra
  \cite{l:classalgebras}). Let us assume that we are given
  $a,b\in\Ch$, and define $N$ to be the an integer such that
  \begin{align*}
    a_{\alpha jk}=b_{\alpha jk}=0
  \end{align*}
  whenever at least one of $\alpha,|j|,|k|$ is greater than $N$. Now,
  we have to find
  \begin{align*}
    p = \sum_{\alpha jk}p_{\alpha jk}U^\alpha R^j W^k\text{ and }
    q = \sum_{\alpha jk}q_{\alpha jk}U^\alpha R^j W^k
  \end{align*}
  such that $ap-bq=0$. Let us choose $p$ and $q$ such that $p_{\alpha
    jk}= q_{\alpha jk}=0$ whenever $\alpha$, $|j|$ or $|k|$ is greater
  than $M$. This implies that $p$ and $q$ together has
  $2M(2M+1)^2$ coefficients to be determined. On the other hand, the
  equation $ap-bq=0$ gives rise to at most $(N+M)(2N+2M+1)^2$ linear equations
  in the coefficients $p_{\alpha jk}$ and $q_{\alpha jk}$, by looking
  at each basis element separately. Choosing $M=4N$ gives
  \begin{align*}
    \#\{\text{variables}\}-\#\{\text{equations}\} = 12N^3+56N^2+15N+1,
  \end{align*}
  which is $\geq 1$ for all $N\geq 0$. Since any homogeneous linear
  system, where the number of variables is strictly greater than the
  number of equations, has a non-zero solution, we conclude that there
  exists a solution where at least one of $p$ and $q$ is non-zero.
\end{proof}

\noindent
Together with Proposition~\ref{prop:zero.divisors},
Lemma~\ref{lemma:ore.condition} implies that there exists a field of
fractions $\Fh$ for $\Ch$ and, furthermore, that the inclusion map
$\lambda:\Ch\to\Fh$ is injective (see e.g. \cite{c:skewFields}). The
algebra $\Fh$ is not particularly well suited as an analogue of the
algebra of functions on the catenoid, since it contains many functions
that are not well-defined at every point of the catenoid. Therefore,
we will construct an extension of $\Ch$ including inverses for a
large class of polynomials.

Let $\Zh(U,R)$ denote the commutative subalgebra of $\Ch$ generated by
$\mid$, $U$, $R$, $R^{-1}$, and define a homomorphism (of commutative
algebras) $\phi:\Zh(U,R)\to C^{\infty}(\reals)$ via
\begin{align*}
  \phi(\mid)=1\qquad  \phi(U) = u\qquad\phi(R)=e^{u}\qquad \phi(R^{-1})=e^{-u}.
\end{align*}
Define the following subset of $\Zh(U,R)$:
\begin{align*}
  \Zhp(U,R) = \{p\in\Zh(U,R):|\phi(p)(u)|>0\text{ for all }u\in\reals\}.
\end{align*}

\begin{lemma}
  $\Zhp(U,R)$ is a multiplicative set.
\end{lemma}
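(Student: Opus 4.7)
The plan is simply to verify the two defining properties of a multiplicative subset: containment of the unit and closure under multiplication. The first is immediate, since $\phi(\mid)$ is the constant function $1$ on $\reals$, which has absolute value strictly positive everywhere, so $\mid\in\Zhp(U,R)$.

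For closure under products, I would take $p,q\in\Zhp(U,R)$. Since $\Zh(U,R)$ is by definition a subalgebra of $\Ch$, the product $pq$ lies in $\Zh(U,R)$. Because $\phi$ is a homomorphism of commutative algebras into $C^\infty(\reals)$, one has $\phi(pq)=\phi(p)\phi(q)$ as smooth functions, and hence $|\phi(pq)(u)|=|\phi(p)(u)|\,|\phi(q)(u)|$ pointwise. Each factor on the right is strictly positive for every $u\in\reals$ by the assumption that $p,q\in\Zhp(U,R)$, so the product is strictly positive as well, giving $pq\in\Zhp(U,R)$.

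If the convention in use additionally requires a multiplicative set to exclude $0$, this is automatic: if $p\in\Zhp(U,R)$ then $\phi(p)$ is a nowhere-vanishing element of $C^\infty(\reals)$, in particular nonzero, so $p\neq 0$ since $\phi(0)=0$.

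I do not anticipate any real obstacle. The only mild point worth noting is the well-definedness of $\phi$ as a homomorphism on $\Zh(U,R)$, but this was already built into the setup preceding the lemma (and is harmless, as $U$, $R$, $R^{-1}$ generate a Laurent polynomial algebra with no further relations beyond $RR^{-1}=R^{-1}R=\mid$, which $\phi$ obviously respects).
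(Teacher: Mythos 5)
Your argument is correct and is essentially identical to the paper's: both rely on $\phi$ being an algebra homomorphism to conclude $|\phi(pq)|=|\phi(p)||\phi(q)|>0$ pointwise. The extra remarks about $\mid\in\Zhp(U,R)$ and $0\notin\Zhp(U,R)$ are harmless additions that the paper omits.
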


\begin{proof}
  Let $p,q\in\Zhp$, which implies that $|\phi(p)|>0$ and
  $|\phi(q)|>0$. Since $\phi$ is a homomorphism, it follows that
  $|\phi(pq)| = |\phi(p)||\phi(q)|>0$. Hence $pq\in\Zhp(U,R)$.
\end{proof}

\noindent In the following we would like to construct an algebra where
all elements of $\Zhp(U,R)$ are invertible. To this end, let us recall
a few basic results concerning noncommutative localization.  Let
$R,R'$ be rings, and let $\S$ be a subset of $R$. A homomorphism
$f:R\to R'$ is called $\S$-inverting if $f(s)$ is invertible for all
$s\in\S$. A general construction gives the following result:

\begin{proposition}[\cite{c:skewFields}, Proposition 1.3.1]
  Given a ring $R$ and a subset $\S\subseteq R$ there exists a ring
  $R_\S$ and an $\S$-inverting homomorphism $\iota:R\to R_S$ such that
  for every $\S$-inverting homomorphism $f:R\to R'$ there exists a
  unique homomorphism $g:R_S\to R'$ such that $f=g\circ\iota$.
\end{proposition}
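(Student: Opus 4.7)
The plan is to realize $R_\S$ as a quotient of a large free ring in which every element of $\S$ is forced to have an inverse, and then read off both the $\S$-inverting property and the universal factorization directly from the defining relations.

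First I would let $F$ denote the free associative unital $\integers$-algebra on the disjoint union of (the underlying set of) $R$ with a set of formal symbols $\{\bar{s} : s \in \S\}$. Let $J \subseteq F$ be the two-sided ideal generated by two families of relations: first, the \emph{structural} relations $r_1 + r_2 - (r_1 + r_2)_R$, $r_1 r_2 - (r_1 r_2)_R$, and $1_F - 1_R$ for all $r_1, r_2 \in R$, which force the image of $R$ in $F/J$ to inherit its ring operations from $R$; and second, the \emph{inverting} relations $s \bar{s} - 1$ and $\bar{s} s - 1$ for each $s \in \S$. I would set $R_\S := F/J$ and take $\iota : R \to R_\S$ to be the composite of the canonical inclusion $R \hookrightarrow F$ with the quotient map. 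The first family of relations makes $\iota$ a ring homomorphism, and the second makes $\iota(s)$ invertible with inverse the class of $\bar{s}$; thus $\iota$ is $\S$-inverting.

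For the universal property, given any $\S$-inverting homomorphism $f : R \to R'$, I would define $\tilde{f} : F \to R'$ on generators by $r \mapsto f(r)$ for $r \in R$ and $\bar{s} \mapsto f(s)^{-1}$ (which exists since $f$ is $\S$-inverting), and extend to a ring homomorphism by the universal property of the free ring. One then checks that $\tilde{f}$ annihilates every generator of $J$: the structural relations because $f$ is a ring homomorphism, and the inverting relations because $f(s)$ and $f(s)^{-1}$ are mutual inverses in $R'$. Hence $\tilde{f}$ descends to a ring homomorphism $g : R_\S \to R'$ with $g \circ \iota = f$. Uniqueness is automatic, since $R_\S$ is generated as a ring by $\iota(R)$ together with the elements $\iota(s)^{-1}$, and any such $g$ is forced on these generators by $g(\iota(r)) = f(r)$ and $g(\iota(s)^{-1}) = f(s)^{-1}$.

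The only genuine obstacle is bookkeeping: one must include enough relations in $J$ to faithfully reproduce the ring structure of $R$ inside $F/J$, but not so many that uniqueness of the descended map is destroyed. Once the free ring and the ideal have been set up correctly, both the $\S$-inverting property and the universal factorization fall out immediately from the universal property of the free ring.
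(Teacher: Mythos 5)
The paper cites this result from Cohn's \emph{Skew Fields} without reproducing a proof, so there is no in-paper argument to compare against; your generators-and-relations construction of $R_\S$ as a quotient of a free ring is the standard proof and is essentially the one Cohn gives. The argument is correct: the structural relations do generate all identifications needed to make $\iota$ a ring homomorphism (e.g.\ $0_R\in J$ follows from taking $r_1=r_2=0_R$, and $\integers$-linearity of $\iota$ then follows by induction from the additive relations), the inverting relations make $\iota$ manifestly $\S$-inverting, and uniqueness of $g$ holds because $R_\S$ is generated by $\iota(R)\cup\{\iota(s)^{-1}:s\in\S\}$ and any ring homomorphism automatically sends the inverse of $\iota(s)$ to the inverse of $f(s)$.
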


\noindent
However, the result does not provide any information on the kernel of
$\iota$, which might be all of $R$. Thus, to guarantee a non-trivial
localization one has to go one step further. First, let us consider
the case when $R=\Ch$ and $\S=\Ch\backslash\{0\}$. Since $\Ch$
satisfies the Ore condition and has no zero-divisors, one may conclude
(cp. \cite[Theorem 1.3.2]{c:skewFields}) that the inclusion map
$\iota_1:\Ch\to\Fh=(\Ch)_\S$ is injective. Next, we let $\S=\Zhp(U,R)$
and consider $\Chh=(\Ch)_{\Zhp(U,R)}$ together with
$\iota:\Ch\to\Chh$. Now, consider the universal property of $\Chh$
applied to $R'=\Fh$ and $\iota_1:\Ch\to\Fh$. Clearly, $\iota_1$ is a
$\Zhp(U,R)$-inverting map from $\Ch$ to $\Fh$. Hence, there exists a
unique homomorphism $g:\Chh\to\Fh$ such that
$\iota_1=g\circ\iota$. Since $\iota_1$ is injective it follows that
$\iota$ is injective. In particular, this implies that the
localization $\Chh$ is non-trivial. Let us summarize the discussion in
the following result.

\begin{proposition}
  There exists an algebra $\Chh$ together with an injective
  $\Zhp(U,R)$-inverting homomorphism $\iota:\Ch\to\Chh$ such that for
  every ring $R'$ and every $\Zhp(U,R)$-inverting homomorphism
  $f:\Ch\to R'$, there exists a unique homomorphism $g:\Chh\to R'$ such
  that $f=g\circ\iota$.
\end{proposition}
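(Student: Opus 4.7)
The plan is to invoke the general universal localization construction for the existence part, and then use the already-constructed field of fractions $\Fh$ as a test object to deduce injectivity of the localization map. The argument is essentially assembled from pieces already in place in the excerpt, so the main work is organizing them correctly rather than any new computation.

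First I would apply the universal localization theorem (Proposition~1.3.1 of Cohn, quoted in the excerpt) to the ring $R=\Ch$ and the subset $\S=\Zhp(U,R)$. This immediately produces an algebra $\Chh=(\Ch)_{\Zhp(U,R)}$ together with a $\Zhp(U,R)$-inverting homomorphism $\iota:\Ch\to\Chh$ satisfying the universal factorization property required by the proposition. Nothing beyond this citation is needed for the existence half of the statement.

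The remaining, and only substantive, step is to verify that $\iota$ is injective. For this I would use the field of fractions $\Fh$. By Proposition~\ref{prop:zero.divisors}, $\Ch$ has no zero divisors, and by Lemma~\ref{lemma:ore.condition} it satisfies the Ore condition; together these two facts (Cohn, Theorem~1.3.2) provide a skew field of fractions $\Fh$ together with an \emph{injective} homomorphism $\iota_1:\Ch\to\Fh$ in which every nonzero element of $\Ch$ becomes invertible. Since any $p\in\Zhp(U,R)$ has $|\phi(p)(u)|>0$ for all $u\in\reals$, in particular $\phi(p)\neq 0$, so $p\neq 0$ and hence $\iota_1(p)$ is invertible. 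Thus $\iota_1$ is $\Zhp(U,R)$-inverting.

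Now I would feed $\iota_1$ into the universal property of $\iota$ with target $R'=\Fh$: there is a unique homomorphism $g:\Chh\to\Fh$ such that $\iota_1=g\circ\iota$. If $a\in\ker\iota$, then $\iota_1(a)=g(\iota(a))=g(0)=0$, which forces $a=0$ by injectivity of $\iota_1$. Hence $\iota$ is injective, and the proposition is established. There is no real obstacle here; the only thing to double-check is the trivial observation that $\Zhp(U,R)\subseteq\Ch\setminus\{0\}$, which is what allows the field of fractions to serve as a witness to injectivity.
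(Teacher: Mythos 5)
Your argument is correct and follows essentially the same route as the paper: existence from Cohn's universal localization, then injectivity by feeding the already-injective Ore localization $\iota_1:\Ch\to\Fh$ into the universal property to factor it through $\iota$. The one small point the paper leaves implicit and you usefully make explicit is the observation that $\Zhp(U,R)\subseteq\Ch\setminus\{0\}$, which is what makes $\iota_1$ a $\Zhp(U,R)$-inverting map.
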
 

\noindent
For the algebra $\Ch$, a basis was given by monomials of the form
$U^\alpha R^jW^k$. For $\Chh$ one may obtain a corresponding normal form,
using the following result.

\begin{lemma}\label{lemma:WpSwap}
  For every $p\in\Zhp(U,R)$ there exists $q\in\Zhp(U,R)$ such that
  \begin{align*}
    Wp=qW
  \end{align*}
  where $q(U,R)=p(U+\hbar\mid,e^{\hbar}R)$.
\end{lemma}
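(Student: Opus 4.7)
The plan is to verify the identity first on the generating monomials of $\Zh(U,R)$, extend by linearity, and then check that the resulting $q$ really lies in $\Zhp(U,R)$ by using the explicit formula $\phi(q)(u)=\phi(p)(u+\hbar)$.

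First I would establish, by induction on $\alpha\geq 0$ and on $|j|$, the two auxiliary commutation rules
\begin{align*}
  WU^\alpha = (U+\hbar\mid)^\alpha W \qquad\text{and}\qquad WR^j = e^{j\hbar} R^j W \quad (j\in\integers).
\end{align*}
Both follow directly from the base relations \eqref{eq:relWWt}--\eqref{eq:relWU}: for the first one the induction step uses $WU^{\alpha+1}=(WU)U^\alpha=(UW+\hbar W)U^\alpha$ together with the inductive hypothesis; for the second, $WR^{j+1}=(WR)R^j=e^\hbar RWR^j$, and the negative case is handled by using $WR^{-1}=e^{-\hbar}R^{-1}W$ in place of $WR$. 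Since $U$ and $R$ commute in $\Ch$, the two rules can be combined into
\begin{align*}
  W\,U^\alpha R^j = (U+\hbar\mid)^\alpha(e^\hbar R)^j\, W.
\end{align*}

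Next, any $p\in\Zh(U,R)$ is a finite complex-linear combination of the commuting monomials $U^\alpha R^j$ ($\alpha\geq 0$, $j\in\integers$). Defining $q := p(U+\hbar\mid, e^\hbar R)$, which makes sense because $U+\hbar\mid$ and $e^\hbar R$ are commuting elements of $\Zh(U,R)$, linearity of the preceding identity in the coefficients of $p$ gives $Wp=qW$ at once.

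It remains to check that $q\in\Zhp(U,R)$. Applying the homomorphism $\phi:\Zh(U,R)\to C^\infty(\reals)$ and using $\phi(U+\hbar\mid)(u)=u+\hbar$ together with $\phi(e^\hbar R)(u)=e^\hbar e^u=e^{u+\hbar}=\phi(R)(u+\hbar)$, one obtains
\begin{align*}
  \phi(q)(u) = \phi(p)(u+\hbar)
\end{align*}
for every $u\in\reals$. Since $p\in\Zhp(U,R)$, the right-hand side is nonzero for every $u$, hence $q\in\Zhp(U,R)$ as required. The only real bookkeeping is the pair of inductions in the first step; everything after that is just linearity and a translation of the variable under $\phi$.
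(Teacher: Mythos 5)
Your proposal is correct and follows essentially the same route as the paper: commute $W$ past the monomials $U^\alpha R^j$ using the defining relations, conclude $Wp = p(U+\hbar\mid, e^\hbar R)W$ by linearity, and verify membership in $\Zhp(U,R)$ via the shift identity $\phi(q)(u)=\phi(p)(u+\hbar)$. The only difference is that you spell out the two inductions explicitly, whereas the paper applies the commutation rules directly without comment.
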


\begin{proof}
  Assume that $p\in\Zhp(U,R)$ and write
  \begin{align*}
    p = \sum p_{\alpha j}U^\alpha R^j
  \end{align*}
  which gives
  \begin{align*}
    Wp &= \sum p_{\alpha j}WU^\alpha R^j
    =\sum p_{\alpha j}(U+\hbar\mid)^\alpha W R^j\\
    &=\sum p_{\alpha j}(U+\hbar\mid)^\alpha(\exph R)^j W
      =p(U+\hbar\mid,e^hR)W.
  \end{align*}
  Now, let us argue that $p(U+\hbar\mid,\exph R)\in\Zhp(U,R)$. By
  construction, $\phi(p)(u)=p(u,e^u)$ which implies that
  \begin{align*}
    \phi\paraa{p(U+\hbar\mid,\exph R)}(u)=p(u+\hbar,\exph e^u)
    =p(u+\hbar,e^{u+\hbar})=\phi(p)(u+\hbar).
  \end{align*}
  Since $|\phi(p)(u)|>0$ for all $u\in\reals$, it follows that
  $|\phi(p)(u+\hbar)|>0$ for all $u\in\reals$, which shows that 
  $p(U+\hbar\mid,\exph R)\in\Zhp(U,R)$.
\end{proof}

\noindent 
From Lemma~\ref{lemma:WpSwap} one can derive
\begin{align*}
  &Wp^{-1}=p(U+\hbar\mid,\exph R)^{-1}W\\
  &W^{-1}p^{-1}=p(U-\hbar\mid,\expmh R)^{-1}W^{-1}
\end{align*}
for $p\in\Zhp(U,R)$. Thus, using these relations, an element
$a\in\Chh$ can always be written as
\begin{align*}
  a = \sum_{k\in\integers}a_kW^k
\end{align*}
with $a_k\in\Fhp(U,R)$, the commutative subalgebra of $\Chh$ generated
by $\Zh(U,R)$ and inverses of elements in $\Zhp(U,R)$.

\section{Curvature}\label{sec:curvature}

\noindent
In this section we introduce a module of vector fields over $\Chh$,
together with a compatible connection. Given a metric $h$, it turns
out that there exists a unique torsion-free and almost complex
connection that is compatible with $h$. Moreover, the corresponding
curvature tensor is computed as well as the Ricci and scalar
curvature.

For the classical catenoid, parametrized by
\begin{align*}
  \xv(u,v) = \paraa{\cosh(u)\cos(v),\cosh(u)\sin(v),u}
\end{align*}
the space of (complex) vector fields can be spanned by $\phi$ and
$\bar{\phi}$, where
\begin{align*}
  \phi = 2\d\xv = (\sinh(z),-i\cosh(z),1)
\end{align*}
with $z=u+iv$.  Correspondingly, let $\{e_1,e_2,e_3\}$ denote the
canonical basis of the free (right) module $(\Chh)^3$, and set
\begin{align*}
  &\Phi = e_1\Phi^1+e_2\Phi^2+e_3\Phi^3\\
  &\Phib = e_1(\Phi^1)^\ast+e_2(\Phi^2)^\ast+e_3(\Phi^3)^\ast,
\end{align*}
where 
\begin{align*}
  &\Phi^1 = \thalf e^{\half\hbar}(RW-R^{-1}W^{-1})\\
  &\Phi^2 = -\tfrac{i}{2} e^{\half\hbar}(RW+R^{-1}W^{-1})\\
  &\Phi^3 = \mid,
\end{align*}
and let $\XChh$ denote the module generated by $\Phi$ and $\Phib$. Note that
\begin{align*}
  &\Phi^1\sim \frac{1}{2}(e^ue^{iv}-e^{-u}e^{-iv})=\sinh(z)\\
  &\Phi^2\sim -\frac{i}{2}(e^{u}e^{iv}+e^{-u}e^{-iv})=-i\cosh(z)
\end{align*}
when considering the formal correspondence $R\sim e^u$ and $W\sim e^{iv}$ as $\hbar\to 0$.

\begin{proposition}
  $\{\Phi,\Phib\}$ is a basis for $\XChh$, which shows that $\XChh$
  is a free (right) $\Chh$-module of rank 2.
\end{proposition}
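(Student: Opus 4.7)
The module $\XChh$ is, by definition, generated by $\Phi$ and $\Phib$ as a right $\Chh$-module, so the content of the statement is that $\{\Phi,\Phib\}$ is linearly independent over $\Chh$. The plan is to suppose $\Phi a + \Phib b = 0$ for some $a,b\in\Chh$ and derive $a=b=0$ by reading off coefficients in the free module $(\Chh)^3$ and using that $\Chh$ has no zero divisors.

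The first step is to expand the relation $\Phi a+\Phib b=0$ in the canonical basis $\{e_1,e_2,e_3\}$. This gives three equations
\begin{align*}
\Phi^i a + (\Phi^i)^\ast b = 0,\qquad i=1,2,3.
\end{align*}
The third equation is immediate, since $\Phi^3=\mid$ forces $a+b=0$, i.e.\ $b=-a$. Substituting this into the first equation reduces the problem to showing that $\bigl(\Phi^1-(\Phi^1)^\ast\bigr)a=0$ implies $a=0$.

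Next I would compute $(\Phi^1)^\ast$ explicitly. Using $R^\ast=R$, $W^\ast=W^{-1}$, together with the commutation relations $W^{-1}R=\expmh R W^{-1}$ and $WR^{-1}=\expmh R^{-1}W$ from \eqref{eq:relWWt}--\eqref{eq:relWU}, one finds
\begin{align*}
(\Phi^1)^\ast = \thalf e^{\half\hbar}\bigl(W^{-1}R - WR^{-1}\bigr) = \thalf e^{-\half\hbar}\bigl(RW^{-1} - R^{-1}W\bigr).
\end{align*}
Thus $\Phi^1-(\Phi^1)^\ast$ is a $\complex$-linear combination of the four distinct basis monomials $RW$, $R^{-1}W^{-1}$, $RW^{-1}$, $R^{-1}W$ with all four coefficients nonzero. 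Since $\{U^\alpha R^j W^k\}$ is a basis of $\Ch$ by the previous proposition, this element is a nonzero element of $\Ch\subset\Chh$.

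Finally, I would invoke the fact that $\Chh$ has no zero divisors: $\Ch$ is a domain by Proposition~\ref{prop:zero.divisors}, satisfies the Ore condition by Lemma~\ref{lemma:ore.condition}, and $0\notin\Zhp(U,R)$, so the Ore localization $\Chh=(\Ch)_{\Zhp(U,R)}$ is again a domain (equivalently, the injection $\iota:\Ch\to\Chh$ extends through $\Chh\to\Fh$ into the skew field $\Fh$). Hence $\bigl(\Phi^1-(\Phi^1)^\ast\bigr)a=0$ forces $a=0$, and therefore $b=0$. This establishes linear independence, and combined with the defining fact that $\Phi$ and $\Phib$ generate $\XChh$ yields the stated identification $\XChh\cong(\Chh)^2$ of free right modules. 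The only delicate point is the bookkeeping in step two (correctly applying the commutation relations to $(\Phi^1)^\ast$) and making sure one is comfortable passing from ``no zero divisors in $\Ch$'' to ``no zero divisors in $\Chh$''; both are routine given the framework already developed.
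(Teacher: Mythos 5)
Your proof is correct, but it takes a genuinely different route from the paper's. The paper does not single out the $\Phi^3$-component; instead it multiplies the three component equations $\Phi^i a + (\Phi^i)^\ast b = 0$ on the left by $\Phi^i$ and sums, exploiting the Weierstrass-type ``null'' identity $\sum_i(\Phi^i)^2 = 0$ so that the $a$-terms drop out, leaving $\paraa{\sum_i\Phi^i(\Phi^i)^\ast}b = 0$; since this coefficient equals the invertible element $S\in\Zhp(U,R)$, one gets $b=0$ directly, and a symmetric argument gives $a=0$. Your version is shorter on the algebraic bookkeeping: using $\Phi^3=\mid$ to force $b=-a$ and then reading off that $\Phi^1-(\Phi^1)^\ast$ is a nonzero $\complex$-combination of the four distinct basis monomials $RW,R^{-1}W^{-1},RW^{-1},R^{-1}W$ is clean, and your computation of $(\Phi^1)^\ast$ is right. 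The trade-off is that you rely on $\Chh$ having no zero divisors, a fact the paper never explicitly establishes (Proposition~\ref{prop:zero.divisors} concerns $\Ch$ only, and the injectivity result proved is for $\iota:\Ch\to\Chh$, not for $g:\Chh\to\Fh$). This can be repaired: one checks that $\Zhp(U,R)$ is a right Ore set in $\Ch$ (given $s\in\Zhp$ and $a=\sum_k a_kW^k$, take $t=\prod_k s(U-k\hbar\mid,e^{-k\hbar}R)\in\Zhp$ and verify $at\in s\Ch$ using Lemma~\ref{lemma:WpSwap}), so that $\Chh$ is the Ore localization and embeds in the skew field $\Fh$, hence is a domain. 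The paper's argument avoids this extra lemma by having the coefficient land in $\Zhp(U,R)$, where invertibility is already known, whereas yours is more elementary once the domain property of $\Chh$ is in hand and also exhibits the ``minimal surface'' identity $\sum_i(\Phi^i)^2=0$ as inessential to the basis claim.
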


\begin{proof}
  Let $a,b\in\Chh$ and assume that $\Phi a+\Phib b = 0$, which is
  equivalent to
  \begin{align*}
    &\Phi^1a+(\Phi^1)^\ast b = 0\\
    &\Phi^2a+(\Phi^2)^\ast b = 0\\
    &\Phi^3a+(\Phi^3)^\ast b = 0
  \end{align*}
  By multiplying these equations from the left by
  $\Phi^1,\Phi^2,\Phi^3$ respectively, and taking their sum, one
  obtains
  \begin{align*}
    \para{\Phi^1(\Phi^1)^\ast+\Phi^2(\Phi^2)^\ast+\Phi^3(\Phi^3)^\ast}b=0
    \implies b=0
  \end{align*}
  since
  \begin{align*}
    &\paraa{\Phi^1}^2+\paraa{\Phi^2}^2+\paraa{\Phi^3}^2=
    \frac{1}{4}e^{\hbar}\parab{(RW-R^{-1}W^{-1})^2-(RW+R^{-1}W^{-1})^2} + \mid\\
                    &=\frac{1}{4}e^{\hbar}\paraa{RWRW-RWR^{-1}W^{-1}-R^{-1}W^{-1}RW+R^{-1}W^{-1}}\\
                    &\quad-\frac{1}{4}e^{\hbar}
                      \paraa{RWRW+R^{-1}W^{-1}R^{-1}W^{-1}+RWR^{-1}W^{-1}+R^{-1}W^{-1}RW}+\mid\\
    &=-\frac{1}{2}e^{\hbar}\paraa{RWR^{-1}W^{-1}+R^{-1}W^{-1}RW}+\mid
      =-\frac{1}{2}e^{\hbar}\paraa{e^{-\hbar}\mid+e^{-\hbar}\mid}+\mid=0.
  \end{align*}
  Analogously, one may multiply the equations from the left with
  $(\Phi^1)^\ast,(\Phi^2)^\ast,(\Phi^3)^\ast$ respectively, and find
  that their sum implies that $a=0$. Since, by definition, $\Phi$ and
  $\Phib$ generate $\XChh$, this shows that $\{\Phi,\Phib\}$ is indeed
  a basis for $\XChh$.
\end{proof}

\noindent
Let $h$ be a hermitian form (or metric) on $\XChh$, i.e.
\begin{align*}
  &h(X,Y+Z) = h(X,Y)+h(X,Z)\\
  &h(X,Y)^\ast = h(Y,X)\\
  &h(X,Ya) = h(X,Y)a
\end{align*}
for all $X,Y,Z\in\XChh$ and $a\in\Chh$. We will assume a diagonal
metric on $\XChh$ given by
\begin{align*}
  h(\Phi,\Phi) = S,\qquad h(\Phib,\Phib)=T,\qquad
  h(\Phi,\Phib) = 0
\end{align*}
with $S$ and $T$ being invertible, implying that the metric is
non-degenerate. For instance, one may consider the induced metric from
the free module by letting $h:(\Chh)^3\times(\Chh)^3\to\Chh$ denote
the bilinear form defined by
\begin{align*}
  h(X,Y) = \sum_{i=1}^3(X^i)^\ast Y^i
\end{align*}
for $X=e_iX^i$ and $Y=e_iY^i$,
for which one computes
\begin{align*}
  &S = h(\Phi,\Phi) = \mid+\thalf \expmh\paraa{R^2+R^{-2}}\\ 
  &T = h(\Phib,\Phib) =  \mid+\thalf \exph\paraa{R^2+R^{-2}}\\
  &h(\Phi,\Phib) = h(\Phib,\Phi)  = 0.
\end{align*}
Note that $S,T\in\Zhp(U,R)$ (implying that they are invertible). We
emphasize that in what follows, $S$ and $T$ are taken to be arbitrary
invertible elements of $\Chh$.

A connection on $\XChh$ is a map $\nabla:\g\times\XChh\to\XChh$ such
that
\begin{align*}
  &\nabla_d(\lambda X+Y) = \lambda\nabla_dX + \nabla_dY\\
  &\nabla_{\lambda d+d'}X = \lambda\nabla_dX + \nabla_{d'}X\\
  &\nabla_{d}(Xa) = \paraa{\nabla_dX}a+Xd(a),
\end{align*}
for $\lambda\in\complex$, $X,Y\in\XChh$, $d,d'\in\g$, and
$a\in\Chh$. A connection is called \emph{hermitian} if
\begin{align*}
  dh(X,Y) = h(\nabla_{d^\ast}X,Y) + h(X,\nabla_dY),
\end{align*}
for $d\in\g$ and $X,Y\in\XChh$. Moreover, we say that $\nabla$ is
\emph{torsion-free} if
\begin{align*}
  \nabla_{\d}\Phib=\nabla_{\db}\Phi.
\end{align*}
Let us introduce an almost complex structure $J:\XChh\to\XChh$ by setting
\begin{align*}
  &J\Phi = i\Phi\\
  &J\Phib = -i\Phib
\end{align*}
and extending $J$ to $\XChh$ as a (right) $\Chh$-module
homomorphism. A connection is called \emph{almost complex} if 
\begin{align*}
  (\nabla_d J)(X)\equiv \nabla_dJ(X)-J\nabla_d X = 0
\end{align*}
for all $d\in\g$ and $X\in\XChh$.

With respect to the basis $\{\Phi,\Phib\}$, a connection on $\XChh$ is
given by choosing arbitrary $\Gamma^{a}_{bc}\in\Chh$ (for
$a,b,c\in\{1,2\}$) and setting
\begin{align*}
  \nabla_a X\equiv\nabla_{\d_a}X = \Phi_b\d_aX^b+\Phi_c\Gamma^{c}_{ab}X^b
\end{align*}
where $\Phi_1=\Phi$, $\Phi_2=\Phib$, $\d_1=\d$, $\d_2=\db$ and
$X=\Phi_aX^a$. Demanding that the connection is torsion-free
immediately gives that $\Gamma^a_{bc}=\Gamma^a_{cb}$.

\begin{lemma}\label{lemma:almost.complex.connection}
  Let $\nabla$ be a connection on $\XChh$ given as
  \begin{align*}
    \nabla_aX = \Phi_b\d_aX^b+\Phi_c\Gamma^{c}_{ab}X^b.
  \end{align*}
  The connection is almost complex if and only if
  \begin{align*}
    &\Gamma^{2}_{11}=\Gamma^{1}_{22}=0\\
    &\Gamma^2_{21}=\Gamma^{1}_{12}=0.
  \end{align*}
\end{lemma}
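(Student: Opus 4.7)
The plan is to reduce the almost complex condition to a finite check on the basis $\{\Phi_1,\Phi_2\}=\{\Phi,\Phib\}$ and then read off the constraints on the $\Gamma^c_{ab}$ by linear independence.

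First I would show that, for any $d\in\g$, the map $\nabla_d J:\XChh\to\XChh$ is a right $\Chh$-module homomorphism. Using the Leibniz rule for $\nabla_d$ and the fact that $J$ is, by definition, a right module homomorphism, one computes
\begin{align*}
(\nabla_d J)(Xa) &= \nabla_d(J(X)\,a) - J\nabla_d(Xa)\\
&= (\nabla_d J(X))a + J(X)d(a) - J(\nabla_dX)a - J(X)d(a)\\
&= \paraa{(\nabla_d J)(X)}a.
\end{align*}
Consequently $(\nabla_d J)\equiv 0$ on $\XChh$ if and only if $(\nabla_d J)(\Phi_b)=0$ for $b=1,2$ and $d\in\{\d,\db\}$, i.e.\ for $a=1,2$.

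Next I would evaluate both sides on the basis. Writing $J\Phi_b=i\eps_b\Phi_b$ with $\eps_1=1$ and $\eps_2=-1$, and using $\nabla_a\Phi_b=\Phi_c\Gamma^c_{ab}$, one obtains
\begin{align*}
\nabla_a J\Phi_b &= \nabla_a(i\eps_b\Phi_b) = i\eps_b\,\Phi_c\Gamma^c_{ab},\\
J\nabla_a\Phi_b &= J(\Phi_c\Gamma^c_{ab}) = (J\Phi_c)\Gamma^c_{ab} = i\eps_c\,\Phi_c\Gamma^c_{ab},
\end{align*}
so that
\begin{align*}
(\nabla_a J)(\Phi_b) = i\sum_{c}(\eps_b-\eps_c)\Phi_c\Gamma^c_{ab}.
\end{align*}

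Since $\{\Phi_1,\Phi_2\}$ is a basis for $\XChh$ (by the previous proposition), this element vanishes iff $(\eps_b-\eps_c)\Gamma^c_{ab}=0$ for every $a,b,c\in\{1,2\}$. For $c=b$ the coefficient is zero and the condition is empty; for $c\neq b$ one has $\eps_b-\eps_c=\pm 2\neq 0$, forcing $\Gamma^c_{ab}=0$. Specialising to $b=1$ gives $\Gamma^2_{11}=\Gamma^2_{21}=0$, and to $b=2$ gives $\Gamma^1_{12}=\Gamma^1_{22}=0$, which is exactly the stated list. The converse is immediate by substituting these vanishings back into the displayed expression for $(\nabla_a J)(\Phi_b)$.

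The argument is essentially bookkeeping; there is no real obstacle once one notes the module-homomorphism property of $\nabla_d J$, which is what legitimises reducing to the basis.
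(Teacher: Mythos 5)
Your proof is correct and follows essentially the same route as the paper: reduce the almost-complex condition to a basis check on $\{\Phi,\Phib\}$, compute $(\nabla_a J)(\Phi_b)$ in terms of the $\Gamma^c_{ab}$, and use linear independence of $\Phi,\Phib$ to read off the vanishings. The one thing you add beyond the paper's proof is the explicit observation that $\nabla_d J$ is a right $\Chh$-module homomorphism, which is what legitimises restricting to the basis; the paper implicitly assumes this step, so you have made a small but genuine gap in the exposition airtight.
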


\begin{proof}
  The condition for $\nabla$ to be almost complex is equivalent to
  \begin{align}
    &\nablad(J\Phi) - J\nablad\Phi = 0\\
    &\nabladb(J\Phi) - J\nabladb\Phi = 0\\
    &\nablad(J\Phib) - J\nablad\Phib = 0\\
    &\nabladb(J\Phib) - J\nabladb\Phib = 0.
  \end{align}
  For an arbitrary connection $\nabla$, given by
  \begin{align*}
  \nabla_aX\equiv\nabla_{\d_a}X = \Phi_b\d_aX^b+\Phi_c\Gamma^{c}_{ab}X^b,
  \end{align*}
  these equations are equivalent to
  \begin{alignat*}{2}
    2i\Phib\Gamma^2_{11} &= 0 &\qquad 2i\Phib\Gamma^{2}_{21}&=0\\
    -2i\Phi\Gamma^1_{12} &= 0 & -2i\Phi\Gamma^1_{22}&=0,
  \end{alignat*}
  which immediately gives the desired result since $\{\Phi,\Phib\}$ is
  a basis for $\XChh$.
\end{proof}

\noindent
Thus, it follows from Lemma~\ref{lemma:almost.complex.connection} that
a connection is torsion-free and almost complex if and only if
\begin{align}\label{eq:torsionfree.acomplex.connection}
  &\nabla_\d\Phi = \Phi\Gamma^1,\quad \nabla_{\db}\Phib = \Phib\Gamma^2,\quad \nabla_{\db}\Phi = \nabla_{\d}\Phib=0
\end{align}
for some $\Gamma^1,\Gamma^2\in\Chh$. If, in addition, the connection
is hermitian the connection coefficients will be uniquely fixed,
as formulated in the next result.

\begin{theorem}\label{thm:levi.civita.connection}
  There exists a unique hermitian torsion-free almost complex
  connection $\nabla$ on $\XChh$, given by
  \begin{align*}
    \nabla_\d\Phi &= \Phi S^{-1}\d S\\ 
    \nabla_{\db}\Phib &= \Phib T^{-1}\db T\\ 
    \nabla_{\db}\Phi &= \nabla_\d\Phib = 0.
  \end{align*}
\end{theorem}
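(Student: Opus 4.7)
The plan is to use the characterization \eqref{eq:torsionfree.acomplex.connection} of torsion-free almost complex connections, which leaves only the two coefficients $\Gamma^1,\Gamma^2\in\Chh$ to be determined, and then show that hermiticity forces them uniquely. First I would record two structural facts. Since $\du,\dv$ are hermitian and $\d=\thalf(\du-i\dv)$, $\db=\thalf(\du+i\dv)$, one has $\d^\ast=\db$ and $\db^\ast=\d$. Similarly, the axiom $h(X,Y)^\ast=h(Y,X)$ applied to $X=Y=\Phi$ and $X=Y=\Phib$ forces $S^\ast=S$ and $T^\ast=T$, which will be needed to match real and complex conjugate versions of the hermiticity equations.

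Next I would extract $\Gamma^1$ and $\Gamma^2$. Evaluating the hermiticity identity
\begin{align*}
\d h(X,Y)=h(\nabla_{\db}X,Y)+h(X,\nabla_\d Y)
\end{align*}
at $X=Y=\Phi$ and using $\nabla_{\db}\Phi=0$ and $\nabla_\d\Phi=\Phi\Gamma^1$ yields $\d S=S\Gamma^1$, so $\Gamma^1=S^{-1}\d S$ by invertibility of $S$. Evaluating the conjugate identity $\db h(X,Y)=h(\nabla_\d X,Y)+h(X,\nabla_{\db} Y)$ at $X=Y=\Phib$ gives analogously $\Gamma^2=T^{-1}\db T$. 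This proves uniqueness.

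For existence I would verify that the connection defined by these Christoffel symbols (and the other relations in \eqref{eq:torsionfree.acomplex.connection}) actually satisfies all four hermiticity equations, for each derivation in $\{\d,\db\}$ and each pair $(X,Y)\in\{\Phi,\Phib\}^2$. The two diagonal equations that did not directly define $\Gamma^1,\Gamma^2$ give the relations $(\Gamma^1)^\ast S=\db S$ and $S(\Gamma^2)^\ast=\d T$ type checks; using $S^\ast=S$, $T^\ast=T$ and that $\d,\db$ are hermitian derivations, these reduce to the already established formulas upon taking adjoints. The off-diagonal equations all collapse because $h(\Phi,\Phib)=h(\Phib,\Phi)=0$ and $\nabla_{\db}\Phi=\nabla_\d\Phib=0$, making both sides vanish term by term.

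The only mildly delicate point, and what I expect to be the main bookkeeping obstacle, is the adjoint calculation in the consistency check: one needs $(S^{-1}\d S)^\ast=\db S\cdot S^{-1}$, which rests on $S^\ast=S$ and on the derivation property $\d(S)^\ast=\db(S^\ast)=\db S$ (and similarly for $T$). Once this is confirmed, no further computation is required, since the definition of $\nabla$ on $\Phi,\Phib$ extends uniquely by the Leibniz rule to all of $\XChh$, and the conditions of being torsion-free, almost complex, and hermitian have all been imposed or verified.
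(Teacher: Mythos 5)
Your proposal is correct and follows essentially the same route as the paper: reduce to the two unknown coefficients $\Gamma^1,\Gamma^2$ via \eqref{eq:torsionfree.acomplex.connection}, note the off-diagonal hermiticity equations are vacuous, and read off $\Gamma^1=S^{-1}\d S$, $\Gamma^2=T^{-1}\db T$ from the diagonal ones. You are somewhat more explicit than the paper in verifying consistency between the pair of diagonal equations $\d S=S\Gamma^1$ and $\db S=(\Gamma^1)^\ast S$ via $S^\ast=S$ and $(\d S)^\ast=\db S$ (the paper just asserts ``both giving $\Gamma^1=S^{-1}\d S$''), which is a worthwhile detail; the only blemish is the typo $S(\Gamma^2)^\ast=\d T$, which should read $(\Gamma^2)^\ast T=\d T$.
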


\begin{proof}
  As noted in \eqref{eq:torsionfree.acomplex.connection}, an arbitrary
  torsion-free almost complex connection may be written as
  \begin{align*}
    &\nabla_\d\Phi = \Phi\Gamma^1,\quad \nabla_{\db}\Phib = \Phib\Gamma^2,\quad \nabla_{\db}\Phi = \nabla_{\d}\Phib=0.    
  \end{align*}
  To prove that the connection is compatible with the metric, one
  needs to show that
  \begin{align*}
    &\d h(\Phi_a,\Phi_b) = h(\nabladb\Phi_a,\Phi_b)+h(\Phi_a,\nablad\Phi_b)\\
    &\db h(\Phi_a,\Phi_b) = h(\nablad\Phi_a,\Phi_b)+h(\Phi_a,\nabladb\Phi_b)
  \end{align*}
  for $a,b\in\{1,2\}$. For $a\neq b$, the above conditions are void
  since each term is separately zero. For $a=b=1$ one obtains
  \begin{align*}
    \d S = S\Gamma^1\quad\text{and}\quad
    \db S = (\Gamma^1)^\ast S
  \end{align*}
  both giving $\Gamma^1=S^{-1}\d S$. Similarly, for $a=b=2$ one
  obtains $\Gamma^2=T^{-1}\db T$. Thus, demanding that a connection
  is metric, torsion-free and almost complex uniquely fixes the
  connection components $\Gamma^{a}_{bc}$.
\end{proof}

\noindent
The curvature $R(\d_a,\d_b)X=\nabla_a\nabla_bX-\nabla_b\nabla_aX$ is
easily computed to be
\begin{align}
  &R(\d,\db)\Phi = -\Phi\db\paraa{S^{-1}\d S} \\
  &R(\d,\db)\Phib = \Phib\d\paraa{T^{-1}\db T}
\end{align}
and since $\XChh$ is a free module, one has uniquely defined curvature
components $R(\d_a,\d_b)\Phi_c=\Phi_p{R^p}_{cab}$ given by
\begin{align*}
  &{R^{1}}_{112}=-\db\paraa{S^{-1}\d S}\qquad
  {R^2}_{212}=\d\paraa{T^{-1}\db T}\\
  &{R^{1}}_{212}={R^{2}}_{112}=0.
\end{align*}
One may also proceed to define
$R_{abpq}=h(\Phib_a,R(\d_p,\d_q)\Phi_b)$, where $\Phib_1=\Phib$ and
$\Phib_2=\Phi$, giving
\begin{align*}
  &R_{1212}=T\d\paraa{T^{-1}\db T}\qquad
  R_{2112}=-S\db\paraa{S^{-1}\d S}\\
  &R_{1112}=R_{2212}=0
\end{align*}
and we note that $R_{abpq}$ does not enjoy the all the symmetries of the
classical Riemann tensor due to the fact that $S\neq T$ in the
noncommutative setting.

Furthermore, there are natural definitions of both Ricci and scalar
curvature
\begin{align*}
  &\Ric_{ab} = {R^p}_{apb}\\
  &R = -T^{-1}R_{1212}S^{-1}+S^{-1}R_{2112}T^{-1}
\end{align*}
giving
\begin{align*}
  &\Ric_{12}=-\db\paraa{S^{-1}\d S}\qquad
    \Ric_{21}=-\d\paraa{T^{-1}\db T}\\
  &\Ric_{11}=\Ric_{22} = 0.\\
  &R = -\d\paraa{T^{-1}\db T}S^{-1}-\db\paraa{S^{-1}\d S}T^{-1}
\end{align*}

\noindent
In all these expressions we note the appearance of ``logarithmic''
derivatives, in analogy with $\d\db\ln(f)=\d(f^{-1}\db f)$.  For the
unique connection in Theorem~\ref{thm:levi.civita.connection} one may
define both gradient and divergence in a natural way.
\begin{definition}
  For $f\in\Chh$ and $X=\Phi a+\Phib b\in\XChh$, define
  \begin{align*}
    \nabla f &= \Phi S^{-1}\db f+\Phib T^{-1}\d f\\
    \div(X) &= S^{-1}\d(Sa)+T^{-1}\db(Tb)\\
    \Delta(f) &= \div(\nabla f).
  \end{align*}
  An element $f\in\Chh$ is called \emph{harmonic} if $\Delta(f)=0$.
\end{definition}

\begin{proposition}
  If $f\in\Chh$ then
  \begin{align*}
    \Delta(f) = (S^{-1}+T^{-1})\d\db f.
  \end{align*}
\end{proposition}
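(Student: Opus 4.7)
The proof is a short direct calculation, so the plan is simply to unfold the three definitions in sequence and use commutativity of the derivations. Writing $\nabla f = \Phi a + \Phib b$ with $a = S^{-1}\db f$ and $b = T^{-1}\d f$, the definition of $\div$ gives
\begin{align*}
\Delta(f) = \div(\nabla f) = S^{-1}\d\paraa{S\cdot S^{-1}\db f} + T^{-1}\db\paraa{T\cdot T^{-1}\d f}.
\end{align*}

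The first simplification I would perform is the cancellation $S\cdot S^{-1} = \mid$ and $T\cdot T^{-1} = \mid$ inside the derivations. This is legal because $S$ and $T$ are invertible (by assumption), and although $\Chh$ is noncommutative, the products $SS^{-1}$ and $TT^{-1}$ are each simply $\mid$ without any reordering. Hence
\begin{align*}
\Delta(f) = S^{-1}\d\db f + T^{-1}\db\d f.
\end{align*}

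Finally, I would invoke commutativity $\d\db = \db\d$, which follows from the fact that $[\du,\dv]=0$ (Proposition~\ref{prop:derivations}) together with the linear definitions $\d = \thalf(\du - i\dv)$ and $\db = \thalf(\du + i\dv)$. Replacing $\db\d f$ by $\d\db f$ and factoring yields
\begin{align*}
\Delta(f) = (S^{-1} + T^{-1})\d\db f,
\end{align*}
which is the claimed identity. There is no genuine obstacle here: the only point requiring any care is the order of multiplication inside $\d(Sa)$, but since the Leibniz rule is not being used (the products $SS^{-1}$ and $TT^{-1}$ cancel before differentiating), the argument is purely formal.
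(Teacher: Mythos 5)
Your proof is correct and follows exactly the same route as the paper: unfold the definitions of $\nabla$ and $\div$, cancel $SS^{-1}=\mid$ and $TT^{-1}=\mid$ before applying any derivation, and invoke $[\d,\db]=0$ (which indeed follows from $[\du,\dv]=0$) to combine the two terms. No difference in approach or substance.
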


\begin{proof}
  The proof consists of a straight-forward computation:
  \begin{align*}
    \div(\nabla f) &= \div\paraa{\Phi S^{-1}\db f + \Phib T^{-1}\d f}
    =S^{-1}\d\paraa{SS^{-1}\db f}+T^{-1}\db\paraa{TT^{-1}\d f}\\
    &= S^{-1}\d\db f + T^{-1}\db\d f = (S^{-1}+T^{-1})\d\db f
  \end{align*}
  since $[\d,\db]=0$.
\end{proof}

\noindent In classical geometry, the fact that the catenoid is a
minimal surface may be characterized by demanding that the embedding
coordinates $x^1,x^2,x^3$ are harmonic. A similar statement holds for
$\Chh$; namely, one notes that if
\begin{align*}
  &X^1 = \thalf e^{\thalf\hbar}\paraa{RW+R^{-1}W^{-1}+(RW+R^{-1}W^{-1})^\ast}\\
  &X^2 = -\tfrac{i}{2}e^{\thalf\hbar}\paraa{RW-R^{-1}W^{-1}-(RW-R^{-1}W^{-1})^\ast}\\
  &X^3 = U
\end{align*}
then $(X^i)^\ast = X^i$ and $\d X^i=\Phi^i$ for $i=1,2,3$, in analogy
with the classical embedding coordinates into $\reals^3$. One may
readily check that the noncommutative embedding coordinates are
harmonic
\begin{align*}
  \Delta(X^1) &= (S^{-1}+T^{-1})\db\d X^1 = (S^{-1}+T^{-1})\db\Phi^1\\
              &= \thalf e^{\thalf\hbar}(S^{-1}+T^{-1})\db(RW-R^{-1}W^{-1})\\
              &= \thalf e^{\thalf\hbar}(S^{-1}+T^{-1})
                \paraa{RW-RW+R^{-1}W^{-1}-R^{-1}W^{-1}} = 0\\
  \Delta(X^2) &= (S^{-1}+T^{-1})\db\d X^2 = (S^{-1}+T^{-1})\db\Phi^2\\
              &= \frac{i}{2}e^{\thalf\hbar}(S^{-1}+T^{-1})\db(RW+R^{-1}W^{-1})\\
              &= \thalf e^{\thalf\hbar}(S^{-1}+T^{-1})
                \paraa{RW-RW-R^{-1}W^{-1}+R^{-1}W^{-1}} = 0\\
  \Delta(X^3) &= (S^{-1}+T^{-1})\db\d X^3 = (S^{-1}+T^{-1})\db(\tfrac{1}{2}\mid) = 0.
\end{align*}

\section{Integration and total curvature}\label{sec:integration}

\noindent
Let us introduce a concept of integration in analogy with integration
on the classical catenoid. The total integral of a function on the
catenoid, with respect to the induced metric can be computed in local
coordinates as
\begin{align*}
  \tau(f) =\int_{-\infty}^\infty\parad{\int_0^{2\pi}f(u,v)\cosh^2(u)dv}du
\end{align*}
whenever the integral exists. For a function, expressible as
\begin{align*}
  f(u,v) = \sum_{k\in\integers}f_k(u,e^u)e^{ikv}
\end{align*}
we note that $\tau(f)=\tau(f_0)$. To define a corresponding
linear functional on $\Chh$, we start by extending the map
$\phi:\Zh(U,R)\to C^\infty(\reals)$ (as defined in Section
\ref{sec:catalgebra}) to $\Fhp(U,R)$ by setting
$\phi(p^{-1})=1/\phi(p)$ for $p\in\Zhp(U,R)$. Then, given $a\in\Ch$
\begin{align*}
  a = \sum_{k\in\integers}a_kW^k
\end{align*}
we set
\begin{align*}
  \tau_0(a) = 2\pi\int_{-\infty}^\infty\phi(a_0)du
\end{align*}
whenever the integral exists. Note that $\tau_0$ is in general not a
trace.

Given a conformal metric of the type
\begin{align*}
  h(\Phi,\Phi) = h(\Phib,\Phib) = S\qquad h(\Phi,\Phib) = 0
\end{align*}
where $S\in\Fhp(U,R)$ is invertible, one introduces
an integral with respect to the corresponding volume form as
\begin{align*}
  \tau_h(a) = 4\pi\int_{-\infty}^\infty\phi(a_0)\phi(S)\,du.
\end{align*}
The extra factor of two is introduced for convenience due to the fact
that for a conformal metric with $h(\d,\d)=h(\db,\db)=s$ one finds
that $g(\d_u,\d_u)=g(\d_v,\d_v)=2s$.  As an illustration of the above
concepts, let us compute the noncommutative total curvature, i.e. the
integral of the Gaussian curvature (defined to be half of the scalar
curvature). From Section~\ref{sec:curvature} one finds the Gaussian
curvature
\begin{align*}
  K = -\frac{1}{4}\d_u\paraa{S^{-1}\d_u S}S^{-1}
\end{align*}
since $\d S = \db S=\thalf\d_u S$ when $S\in\Fhp(U,R)$, and the
corresponding integral gives
\begin{align*}
  \tau_h(K) = -\pi\int_{-\infty}^\infty\d_u\paraa{s^{-1}\d_us}du
  =-\pi\bracketb{s^{-1}\d_u s}_{-\infty}^\infty,
\end{align*}
where $s=\phi(S)$.  For instance, choosing a metric in analogy with
the induced metric from $\reals^3$
\begin{align*}
  &S = \frac{1}{4}(R+R^{-1})^2\implies s=\phi(S)=\cosh^2(u)\implies\\
  &\tau_h(K) = -\pi\bracketb{2\tanh(u)}_{-\infty}^{\infty}=-4\pi,
\end{align*}
in accordance with the classical result.

\section{Bimodules}\label{sec:bimodules}

\noindent
In this section we will introduce classes of left and right modules
over $\Ch$, as well as study compatibility conditions for
bimodules. Moreover, constant curvature connections are introduced and
their relations to the bimodule structure is discussed.

Let $\CcRZ$ denote the space of complex valued smooth functions on
$\reals\times\integers$ with compact support (in both variables), together with the inner
product
\begin{align}\label{eq:module.inner.product}
  \angles{\xi,\eta} = \sum_{k=-\infty}^\infty
  \int_{-\infty}^\infty\xi(x,k)\bar{\eta}(x,k)dx.
\end{align}

\begin{proposition}\label{prop:left.right.module}
  Let $\lambda_0,\lambda_1,\eps\in\reals$ and $r\in\integers$. If
  $\lambda_0\eps+\lambda_1r=-\hbar$ then
  \begin{align*}
    (W\xi)(x,k)&=\xi(x-\eps,k-1)\\
    (W^{-1}\xi)(x,k)&=\xi(x+\eps,k+1)\\
    (R\xi)(x,k) &= e^{\lambda_0x+\lambda_1k}\xi(x,k)\\
    (R^{-1}\xi)(x,k) &= e^{-\lambda_0x-\lambda_1k}\xi(x,k)\\
    (U\xi)(x,k)&=(\lambda_0x+\lambda_1k)\xi(x,k)
  \end{align*}
  for $\xi\in\CcRZ$, defines a
  left $\Ch$-module structure on
  $\CcRZ$. Correspondingly,
  \begin{align*}
    (\xi W)(x,k) &= \xi(x-\eps',k-r')\\
    (\xi W^{-1})(x,k) &= \xi(x+\eps',k+r')\\
    (\xi R)(x,k) &= e^{\mu_0x+\mu_1k}\xi(x,k)\\
    (\xi R^{-1})(x,k) &= e^{-\mu_0x-\mu_1 k}\xi(x,k)\\
    (\xi U)(x,k) &= (\mu_0x+\mu_1k)\xi(x,k)
  \end{align*}
  defines a right $\Ch$-module structure on $\CcRZ$ if
  $\mu_0,\mu_1,\eps'\in\reals$ and $r'\in\integers$ such that
  $\mu_0\eps'+\mu_1r'=\hbar$. Moreover, both the left and right module
  structures are compatible with the inner product; i.e.
  \begin{align*}
    \angles{a\xi,\eta}=\angles{\xi,a^\ast\eta}\quad\text{and}\quad
    \angles{\xi a,\eta} = \angles{\xi,\eta a^\ast}
  \end{align*}
  for $a\in\Ch$ and $\xi,\eta\in\CcRZ$.
\end{proposition}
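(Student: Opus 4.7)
\medskip

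The plan is to verify the three claims (left module, right module, inner-product compatibility) by direct computation on the generators; since $\Ch$ is presented as the free algebra on $U,R,R^{-1},W,W^{-1}$ modulo the ideal $I_\hbar$ generated by \eqref{eq:relWWt}--\eqref{eq:relWU}, it suffices, to obtain the left action, to define the action of each generator as specified on $\CcRZ$ and check that the defining relations are respected; an analogous plan works on the right. The inner-product compatibility will reduce to a change of variables in the integral/sum and a parity check on the multiplicative factors.

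I would first verify the left $\Ch$-module structure. The trivial relations $\Wt W=W\Wt=\mid$, $\Rt R=R\Rt=\mid$, $RU=UR$, $\Rt U=U\Rt$ are immediate from the formulas, since $U$ and $R^{\pm 1}$ all act by multiplication by functions of $(x,k)$. The heart of the check is the three families $WR=\exph RW$, $W\Rt=\expmh\Rt W$ (and their tilded companions), and $WU=UW+\hbar W$. For $WR$ applied to $\xi$ one gets $e^{\lambda_0(x-\eps)+\lambda_1(k-r)}\xi(x-\eps,k-r)$, while $RW\xi$ produces $e^{\lambda_0 x+\lambda_1 k}\xi(x-\eps,k-r)$, so that $WR=e^{-(\lambda_0\eps+\lambda_1 r)}RW$, which equals $\exph RW$ precisely when $\lambda_0\eps+\lambda_1 r=-\hbar$. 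An identical computation handles $W\Rt,\Wt R,\Wt\Rt$, and the commutator with $U$ produces $WU-UW=-(\lambda_0\eps+\lambda_1 r)W=\hbar W$, again by the Ore-type condition. The right action is verified by the same sequence of computations with the roles of left and right translation interchanged; the opposite sign appearing in $\mu_0\eps'+\mu_1 r'=\hbar$ compensates for the fact that right multiplication by $W$ now intertwines with right multiplications by $R$ and $U$ (rather than left).

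For the inner-product identities, I would first observe that with $U^\ast=U$, $R^\ast=R$, $(\Rt)^\ast=\Rt$, the generators $U,R,R^{-1}$ act by multiplication by real-valued functions (because $\lambda_0,\lambda_1\in\reals$), so $\angles{a\xi,\eta}=\angles{\xi,a^\ast\eta}$ for $a\in\{U,R,R^{-1}\}$ is immediate. The remaining case $W^\ast=\Wt$ is the only one that moves the argument, and here the substitution $y=x-\eps$, $\ell=k-r$ in \eqref{eq:module.inner.product} gives
\begin{align*}
\angles{W\xi,\eta}=\sum_{k}\int\xi(x-\eps,k-r)\overline{\eta(x,k)}\,dx
=\sum_{\ell}\int\xi(y,\ell)\overline{\eta(y+\eps,\ell+r)}\,dy
=\angles{\xi,\Wt\eta},
\end{align*}
using that the measure $dx$ and the counting measure on $\integers$ are translation invariant. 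The corresponding identity on the right is obtained verbatim with $\eps',r'$ in place of $\eps,r$.

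The only subtlety, and the step I would be most careful about, is the systematic (rather than error-prone) bookkeeping of all twelve relations: one should notice that once $WR$ and $WU$ are verified, the relations for $W\Rt,\Wt R,\Wt\Rt,\Wt U$ follow mechanically either by conjugation (taking adjoints, using that $(\cdot)^\ast$ exchanges $W\leftrightarrow\Wt$ and $R\leftrightarrow\Rt$) or by multiplying the verified identities on the left/right by $\Wt$ or $\Rt$. This reduces the core combinatorial content to the single identity $\lambda_0\eps+\lambda_1 r=-\hbar$ (resp.\ $\mu_0\eps'+\mu_1 r'=\hbar$), which is exactly the hypothesis. No genuine obstacle appears; the proposition amounts to the observation that the constraint in the hypothesis is tuned to the Weyl-type commutation imposed by $I_\hbar$.
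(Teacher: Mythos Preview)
Your proposal is correct and follows essentially the same approach as the paper: reduce to checking the defining relations \eqref{eq:relWWt}--\eqref{eq:relWU} on the generators, dispose of the trivial ones, and verify $WR=\exph RW$ and $WU=UW+\hbar W$ directly (both collapsing to the hypothesis $\lambda_0\eps+\lambda_1 r=-\hbar$), then establish inner-product compatibility for $W$ by the translation $y=x-\eps$, $\ell=k-r$. Your extra remark that the remaining relations follow from the two core ones by multiplying through by $\Wt$ or $\Rt$ is a small organizational improvement, and you have also tacitly corrected the typo in the statement (the shift in $k$ should be by $r$, not by $1$, as the paper's own proof confirms).
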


\begin{proof}
  Let us show that the above definitions define a left module
  structure. The right module structure is checked in an analogous
  way.

  It follows immediately from the definitions that
  \begin{align*}
    &WW^{-1}\xi=\xi,\quad W^{-1}W\xi = \xi,\quad
    RR^{-1}\xi=\xi,\quad R^{-1}R\xi = \xi\\
    &[R,U]\xi=0,\quad[R^{-1},U]\xi = 0.
  \end{align*}
  Thus, it remains to check the following relations:
  \begin{align*}
    &WR=\exph RW\\
    &WU=UW+\hbar W.
  \end{align*}
  One gets
  \begin{align*}
    &WR\xi(x,k) -e^\hbar RW\xi(x,k)=
    We^{\lambda_0x+\lambda_1k}\xi(x,k)
      -e^\hbar R\xi(x-\eps,k-r)\\
    &=e^{\lambda_0(x-\eps)+\lambda_1(k-r)}\xi(x-\eps,k-r)
      -e^{\hbar}e^{\lambda_0x+\lambda_1k}\xi(x-\eps,k-r)\\
    &=e^{\lambda_0x+\lambda_1k}
      \paraa{e^{-\lambda_0\eps-\lambda_1r}-e^{\hbar}}\xi(x-\eps,k-r)=0,
  \end{align*}
  by using that $\lambda_0\eps+\lambda_1r=-\hbar$. Finally, one computes
  \begin{align*}
    WU&\xi(x,k)-UW\xi(x,k)-\hbar W\xi(x,k)\\
      &=W(\lambda_0x+\lambda_1k)\xi(x,k)
        -U\xi(x-\eps,k-r)-\hbar\xi(x-\eps,k-r)\\
      &=\paraa{\lambda_0(x-\eps)+\lambda_1(k-r)}\xi(x-\eps,k-r)
        -(\lambda_0x+\lambda_1k)\xi(x-\eps,k-r)\\
    &\hspace{70mm}-\hbar\xi(x-\eps,k-r)\\
      &=\paraa{-\lambda_0\eps-\lambda_1r-\hbar}\xi(x-\eps,k-r)=0,
  \end{align*}
  again using that $\lambda_0\eps+\lambda_1r=-\hbar$.  It is now
  straightforward to check that the module structure is compatible
  with the inner product; e.g.
  \begin{align*}
    \angles{W\xi,\eta} = \sum_{k=-\infty}^\infty\int_{-\infty}^\infty
    \xi(x-\eps,k-r)\bar{\eta}(x,k)dx
  \end{align*}
  which, by setting $l=k-r$ and $y=x-\eps$, becomes
  \begin{align*}
    \angles{W\xi,\eta} = \sum_{l=-\infty}^\infty\int_{-\infty}^\infty
    \xi(y,l)\bar{\eta}(y+\eps,l+r)dy=\angles{\xi,W^{-1}\eta}
    =\angles{\xi,W^\ast\eta},
  \end{align*}
  and similar computations are carried out for the remaining generators.
\end{proof}

\noindent
In order for $\CcRZ$ to be a $\Ch-\C_{\hbar'}$-bimodule, one has to
demand that the two structures in
Proposition~\ref{prop:left.right.module} are compatible; i.e. that
$A(\xi B)=(A\xi)B$ for all $A\in\Ch$ and $B\in\C_{\hbar'}$. This
induces certain compatibility conditions on the parameters, as
formulated in the next result.

\begin{proposition}\label{prop:bimodule.structure}
  Let $\lambda_0,\lambda_1,\eps,\eps'\in\reals$ and $r,r'\in\integers$
  such that $\lambda_0\eps+\lambda_1r=-\hbar$ and
  $\mu_0\eps'+\mu_1r'=-\hbar'$. If $\lambda_0\eps'+\lambda_1r'=0$ and
  $\mu_0\eps+\mu_1r=0$ then $\CcRZ$ is
  a $\Ch-\mathcal{C}_{\hbar'}$-bimodule.
\end{proposition}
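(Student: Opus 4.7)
The plan is to reduce the bimodule compatibility condition $A(\xi B)=(A\xi)B$ to the case where $A$ and $B$ are generators of $\Ch$ and $\mathcal{C}_{\hbar'}$ respectively. This reduction is valid: if $A_1,A_2\in\Ch$ each commute past every right action, then so does $A_1A_2$, since $(A_1A_2)(\xi B)=A_1\paraa{A_2(\xi B)}=A_1\paraa{(A_2\xi)B}=\paraa{(A_1A_2)\xi}B$, and the mirror argument handles products in $\mathcal{C}_{\hbar'}$. Combined with linearity, this reduces the check to the twenty-five pairs drawn from the generators $\{U,R^{\pm 1},W^{\pm 1}\}$ on each side. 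Proposition~\ref{prop:left.right.module} already guarantees that the two sides are modules, so bimodule compatibility is all that remains.

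I would then split the cases into three types according to how each action operates on $\CcRZ$. Type (i): both sides are multiplication operators, i.e.\ both generators lie in $\{U,R^{\pm 1}\}$. Multiplications by functions of $(x,k)$ on opposite sides always commute, so these nine cases are automatic. Type (ii): both sides are shifts, i.e.\ both generators lie in $\{W^{\pm 1}\}$. In these four cases the two composites shift $\xi$ by the same totals $\pm\eps\pm\eps'$ in $x$ and $\pm r\pm r'$ in $k$, so they agree identically. Type (iii): a shift on one side against a multiplication on the other; this is the only type that can fail.

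A representative computation in type (iii) is
\begin{align*}
  R(\xi W)(x,k) &= e^{\lambda_0 x+\lambda_1 k}\xi(x-\eps',k-r'),\\
  \paraa{(R\xi)W}(x,k) &= e^{\lambda_0(x-\eps')+\lambda_1(k-r')}\xi(x-\eps',k-r'),
\end{align*}
and these agree precisely when $\lambda_0\eps'+\lambda_1 r'=0$. The analogous computations for $R^{-1}$ and $U$ on the left paired with $W^{\pm 1}$ on the right yield the same condition, while the symmetric situation (left-acting $W^{\pm 1}$ against right-acting $U$ or $R^{\pm 1}$) produces $\mu_0\eps+\mu_1 r=0$.

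The only obstacle is bookkeeping the pairs; no conceptually difficult step intervenes. Once the cases are grouped by type, the type (iii) checks generate exactly the two constraints stated in the proposition, and nothing further is needed.
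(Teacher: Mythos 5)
Your proof is correct and follows essentially the same route as the paper: reduce to generators, then verify the compatibility identity $(A\xi)B=A(\xi B)$ by direct computation, with the two stated hypotheses arising exactly from the shift-against-multiplication cases. The type (i)/(ii)/(iii) trichotomy is a tidy way to organize the twenty-five generator pairs, but it packages the same verification the paper performs (the paper writes out $(W\xi)R$ and $(R\xi)W$ and leaves the rest as ``analogous''), so this is a presentational refinement rather than a different argument.
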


\begin{proof}
  To prove that $\CcRZ$ is a
  bimodule, one has to show that the left and right module structures
  given in Proposition~\ref{prop:left.right.module} are compatible;
  i.e.
  \begin{align*}
    \paraa{(A\xi)B}(x,k) = \paraa{A(\xi B)}(x,k)
  \end{align*}
  for all $A\in\Ch$ and $B\in\C_{\hbar'}$. It is enough to check
  compatibility for the generators, for instance
  \begin{align*}
    \paraa{(W\xi)R}(x,k)&-\paraa{W(\xi R)}(x,k)=
      e^{\mu_0x+\mu_1k}(W\xi)(x,k)-(\xi R)(x-\eps,k-r)\\
    &=\paraa{e^{\mu_0x+\mu_1k}-e^{\mu_0(x-\eps)+\mu_1(k-r)}}\xi(x-\eps,k-r)
      =0
  \end{align*}
  since $\mu_0\eps+\mu_1r = 0$. Similarly, one gets
  \begin{align*}
    \paraa{(R\xi)W}(x,k)&-\paraa{R(\xi W)}(x,k)=
    (R\xi)(x-\eps',k-r')-e^{\lambda_0x+\lambda_1k}(\xi W)(x,k)\\
    &=\paraa{e^{\lambda_0(x-\eps')+\lambda_1(k-r')}-e^{\lambda_0x+\lambda_1k}}\xi(x-\eps',k-r')=0
  \end{align*}
  since $\lambda_0\eps'+\lambda_1r'=0$. The remaining compatibility
  conditions may be checked in an analogous way.
\end{proof}

\noindent
Note that it possible to obtain a $\Ch-\C_{\hbar'}$-bimodule for
arbitrary $\hbar,\hbar'\in\reals$; namely, for $\eps\neq\eps'$
one may set
\begin{align*}
  &r = r' = 1\qquad
    \lambda_0=-\frac{\hbar}{\eps-\eps'}\qquad
  \mu_0=-\frac{\hbar'}{\eps-\eps'}\\
  &\lambda_1 = \frac{\hbar\eps'}{\eps-\eps'}\qquad
    \mu_1 = \frac{\hbar'\eps}{\eps-\eps'},
\end{align*}
fulfilling the requirements of
Proposition~\ref{prop:bimodule.structure}.

Define linear maps
$\nabla_u,\nabla_v:\CcRZ\to\CcRZ$ via
\begin{align}\label{eq:module.connection.def}
  (\nabla_u\xi)(x,k) = \alpha\frac{d\xi}{dx}(x,k)
  \qquad\text{and}\qquad
  (\nabla_v\xi)(x,k) = \beta x\xi(x,k)
\end{align}
for $\alpha,\beta\in\complex$. It is straightforward to check that
\begin{align*}
  &\nabla_u(a\xi) = a\nabla_u\xi + (\d_u a)\xi\\
  &\nabla_v(a\xi) = a\nabla_v\xi + (\d_v a)\xi
\end{align*}
for all $a\in\Ch$ if and only if $\alpha=1/\lambda_0$ and
$\beta=i/\eps$. Similarly, it holds that
\begin{align*}
  &\nabla_u(\xi a) = (\nabla_u\xi)a + \xi(\d_u a)\\
  &\nabla_v(\xi a) = (\nabla_v\xi)a + \xi(\d_v a)
\end{align*}
for all $a\in\C_{\hbar'}$ if and only if $\alpha=1/\mu_0$ and
$\beta=i/\eps'$. Thus, \eqref{eq:module.connection.def} defines a left
resp. right connection on $\CcRZ$
of constant curvature $\alpha\beta$; i.e.
\begin{align*}
  \nabla_u\nabla_v\xi-\nabla_v\nabla_u\xi = \alpha\beta\xi.
\end{align*}
By choosing suitable parameters, one obtains a bimodule
connection.

\begin{proposition}
  Assume that $\CcRZ$ is a
  $\Ch-\mathcal{C}_{\hbar'}$-bimodule as in
  Proposition~\ref{prop:bimodule.structure}, and that 
  \begin{align*}
    &(\nabla_u\xi)(x,k) = \frac{1}{\lambda_0}\frac{d\xi}{dx}(x,k)\\
    &(\nabla_v\xi(x,k)) = \frac{i}{\eps}x\xi(x,k)
  \end{align*}
  is a bimodule connection on $\CcRZ$.
  \begin{enumerate}
    \item If $\hbar=\hbar'$ then $\hbar=\hbar'=0$,
    \item if $\hbar\neq\hbar'$ then $\hbar/\hbar'\in\rationals$ and
      \begin{align*}
        &\lambda_0=\mu_0=\frac{\hbar r'}{\eps(r-r')}\qquad
        \lambda_1=-\frac{\hbar}{r-r'}\qquad
        \mu_1 = -\frac{\hbar'}{r-r'}
      \end{align*}
      for arbitrary $\eps=\eps'\in\reals$ and $r,r'\in\integers$ such
      that $r/r'=\hbar/\hbar'$. Moreover,
      \begin{align*}
        \nabla_u\nabla_v\xi(x,k)-\nabla_v\nabla_u\xi(x,k) =
        i\frac{\hbar-\hbar'}{\hbar\hbar'}\xi(x,k).       
      \end{align*}
  \end{enumerate}
\end{proposition}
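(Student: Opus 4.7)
The plan is to reduce the statement to elementary linear algebra by first extracting the rigidity forced by requiring a \emph{single} pair $(\alpha,\beta)$ to define both a left and a right connection. From the discussion preceding the proposition, the left-module condition gives $\alpha=1/\lambda_0$ and $\beta=i/\eps$, while the right-module condition gives $\alpha=1/\mu_0$ and $\beta=i/\eps'$; the bimodule connection assumption therefore immediately forces
\begin{align*}
 \lambda_0=\mu_0\qquad\text{and}\qquad\eps=\eps'
\end{align*}
(both nonzero, else $\alpha,\beta$ would be undefined).

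Next I would substitute these identifications into the four relations of Proposition~\ref{prop:bimodule.structure} (the two Ore-type equations and the two compatibility equations). Subtracting within each pair yields
\begin{align*}
 \lambda_1(r-r')=-\hbar,\qquad\mu_1(r-r')=-\hbar',
\end{align*}
together with the two expressions $\lambda_0\eps=-\lambda_1 r'=-\mu_1 r$ coming from the compatibility equations, whence also $\lambda_1 r'=\mu_1 r$. For part~(1), assuming $\hbar=\hbar'$, the two displayed relations coincide, so either $r=r'$ (in which case each relation forces $\hbar=\hbar'=0$ directly), or $\lambda_1=\mu_1$, and then $\lambda_1 r'=\lambda_1 r$ again forces $\lambda_1(r-r')=0$ and hence $\hbar=0$; either way, $\hbar=\hbar'=0$.

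For part~(2), $\hbar\neq\hbar'$ rules out $r=r'$, so $\lambda_1=-\hbar/(r-r')$ and $\mu_1=-\hbar'/(r-r')$ are forced. Equating the two resulting formulas
\begin{align*}
 \lambda_0\eps=\frac{\hbar r'}{r-r'}=\frac{\hbar' r}{r-r'}
\end{align*}
gives $\hbar r'=\hbar' r$, i.e.\ $\hbar/\hbar'=r/r'\in\rationals$, and dividing the first expression by $\eps$ yields the claimed value of $\lambda_0=\mu_0$. The curvature is then read off from the definitions: one has $\nabla_u\nabla_v\xi-\nabla_v\nabla_u\xi=\alpha\beta\,\xi=(i/(\lambda_0\eps))\xi$, and substituting $\lambda_0\eps=\hbar r'/(r-r')$ together with $\hbar r'=\hbar' r$ rewrites $i(r-r')/(\hbar r')$ as $i(\hbar-\hbar')/(\hbar\hbar')$, as required.

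The only real obstacle is bookkeeping: keeping the four parameters $\lambda_0,\lambda_1,\mu_0,\mu_1$ distinct from their primed counterparts while manipulating the linear system, and making sure the degenerate case $r=r'$ is treated explicitly in part~(1) so that the dichotomy between the two cases emerges unambiguously. No analytic input is needed beyond the bimodule and connection axioms already in place.
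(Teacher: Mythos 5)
Your proof is correct and follows essentially the same route as the paper: extract $\lambda_0=\mu_0$ and $\eps=\eps'$ from the connection compatibility, combine with the four linear conditions from Proposition~\ref{prop:bimodule.structure}, and solve. Your case analysis in part~(1) is in fact slightly cleaner than the paper's, which forms the ratio equation $r/r'+r'/r=2$ and therefore has to establish $r,r'\neq 0$ first; you avoid that division by working directly with $(\lambda_1-\mu_1)(r-r')=0$. In part~(2) you should spell out the one small point the paper makes explicit before writing $\hbar/\hbar'=r/r'$: since $\lambda_0\eps\neq 0$, equations $\lambda_0\eps=-\lambda_1 r'=-\mu_1 r$ force $r,r'\neq 0$, and then $\lambda_0\eps=\hbar r'/(r-r')=\hbar' r/(r-r')$ forces $\hbar,\hbar'\neq 0$, so the divisions are legitimate.
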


\begin{proof}
  For a bimodule connection one must necessarily have
  $\lambda_0=\mu_0$ and $\eps=\eps'$. Together with the conditions in
  Proposition~\ref{prop:bimodule.structure} one obtains the equations
  \begin{align}
    &\lambda_0\eps+\lambda_1r=-\hbar\label{eq:lelr}\\
    &\lambda_0\eps+\mu_1r' = \hbar'\label{eq:lemrp}\\
    &\lambda_0\eps+\lambda_1r'=0\label{eq:lelrp}\\
    &\lambda_0\eps+\mu_1r = 0.\label{eq:lemr}
  \end{align}
  Since $\lambda_0,\eps\neq 0$ (in order for the bimodule connection to
  be defined), equations \eqref{eq:lelrp} and \eqref{eq:lemr} imply
  that $r,r'\neq 0$. Thus, one may solve these equations as
  \begin{align}
    \lambda_1 = -\frac{\lambda_0\eps}{r'}\qquad
    \mu_1 = -\frac{\lambda_0\eps}{r}.\label{eq:lomo.solution}
  \end{align}
  Inserting \eqref{eq:lomo.solution} into \eqref{eq:lelr} and
  \eqref{eq:lemrp} gives
  \begin{align}
    &\lambda_0\eps\parab{1-\frac{r}{r'}} =-\hbar\label{eq:lerrph}\\
    &\lambda_0\eps\parab{1-\frac{r'}{r}} = \hbar'\label{eq:lerrphp}
  \end{align}
  Now, assume that $\hbar=\hbar'$. Summing \eqref{eq:lerrph} and
  \eqref{eq:lerrphp} yields
  \begin{align*}
    \frac{r}{r'}+\frac{r'}{r} = 2,
  \end{align*}
  which has the unique solution $r=r'$, implying that $\hbar=\hbar'=0$
  via \eqref{eq:lerrph}. This proves the first part of the
  statement. Next, assume that $\hbar\neq \hbar'$.

  First we note that neither $\hbar$ nor $\hbar'$ can be zero, since
  that implies (by \eqref{eq:lerrph} and \eqref{eq:lerrphp}) that
  $\hbar=\hbar'=0$ contradicting the assumption that
  $\hbar\neq\hbar'$. Thus, we can assume that $\hbar,\hbar'\neq 0$,
  which implies that $r\neq r'$ (again, by \eqref{eq:lerrph} and
  \eqref{eq:lerrphp}). Solving \eqref{eq:lerrph} for $\lambda_0$ gives
  \begin{align}
    \lambda_0 = \frac{\hbar r'}{\eps(r-r')}\label{eq:lz.solution}
  \end{align}
  which, when inserted in \eqref{eq:lerrphp}, gives
  \begin{align*}
    \frac{r}{r'} = \frac{\hbar}{\hbar'}.
  \end{align*}
  Hence, the quotient $\hbar/\hbar'$ is necessarily rational, since
  $r,r'\in\integers$, and inserting \eqref{eq:lz.solution} into
  \eqref{eq:lomo.solution} yields
  \begin{align*}
    \lambda_1 = -\frac{\hbar}{r-r'}\qquad
    \mu_1 = -\frac{\hbar'}{r-r'}.
  \end{align*}
  Finally,
  \begin{align*}
    \lambda_0\eps = \frac{\hbar r'}{r-r'}
    =\frac{\hbar\frac{r\hbar'}{\hbar}}{r-\frac{r\hbar'}{\hbar}}
    =\frac{\hbar\hbar'}{\hbar-\hbar'}
  \end{align*}
  giving
  \begin{equation*}
    \nabla_u\nabla_v\xi-\nabla_v\nabla_u\xi=\frac{i}{\lambda_0\eps}\xi
    =i\frac{\hbar-\hbar'}{\hbar\hbar'}\xi.\qedhere
  \end{equation*}

\end{proof}

\noindent
It is noteworthy that the curvature of the bimodule connection only
depends on $\hbar$ and $\hbar'$ and is consequently independent of the
particular choice of parameters that defines the bimodule.  For the
noncommutative torus, one proceeds to define compatible left and right
hermitian structures, which implies first of all that the modules are
projective and secondly, that certain torus algebras for different
values of $\theta$ are Morita equivalent. It would be interesting to
obtain similar results for the noncommutative catenoid.

\section*{Acknowledgement}

\noindent
J.A would like to thank G. Landi for inspiring discussions, and the
University of Trieste for hospitality. This work was partly funded by
the Swedish Research Council and the EU COST action QSPACE.

\bibliographystyle{alpha}
\bibliography{nccat}  

\end{document}